\def\blfootnote{\xdef\@thefnmark{}\@footnotetext}
\tikzset{close/.style={near start,outer sep=-2pt}} 
\tikzset{
  % style to apply some styles to each segment of a path
  on each segment/.style={
    decorate,
    decoration={
      show path construction,
      moveto code={},
      lineto code={
        \path [#1]
        (\tikzinputsegmentfirst) -- (\tikzinputsegmentlast);
      },
      curveto code={
        \path [#1] (\tikzinputsegmentfirst)
        .. controls
        (\tikzinputsegmentsupporta) and (\tikzinputsegmentsupportb)
        ..
        (\tikzinputsegmentlast);
      },
      closepath code={
        \path [#1]
        (\tikzinputsegmentfirst) -- (\tikzinputsegmentlast);
      },
    },
  },
  % style to add an arrow in the middle of a path
  mid arrow/.style={postaction={decorate,decoration={
        markings,
        mark=at position .5 with {\arrow[#1]{stealth}}
      }}},
}
\newlength\Textht
\newtheorem{thm}{Theorem}[section]
\newtheorem*{thm*}{Theorem}
\newtheorem{cor}[thm]{Corollary}
\newtheorem{lem}[thm]{Lemma}
\newtheorem{prop}[thm]{Proposition}
\theoremstyle{definition}
\newtheorem{defn}[thm]{Definition}
\newtheorem{obs}[thm]{Observation}
\newtheorem{rem}[thm]{Remark}
\newtheorem{exam}[thm]{Example}
\newcommand{\M}[1]{{\color{magenta} #1}}
\newcommand{\ZZ}{\mathbb{Z}}
\newcommand{\gh}{\widetilde{H_{\Gamma}}}
\newcommand{\im}{\texttt{Im}}
\begin{document}

\title{Presentation of kernels of rational characters of right-angled Artin groups}

\author{Montserrat Casals-Ruiz}
\address{Ikerbasque - Basque Foundation for Science and Matematika Saila,UPV/EHU,  Sarriena s/n, 48940, Leioa - Bizkaia, Spain.} 
\email{montsecasals@gmail.com}

\author{Ilya Kazachkov}
\address{Ikerbasque - Basque Foundation for Science and Matematika Saila,UPV/EHU,  Sarriena s/n, 48940, Leioa - Bizkaia, Spain.} 
\email{ilya.kazachkov@gmail.com}

\author{Mallika Roy}
\address{Department of Mathematics, Universidad del Pa\`is Vasco (UPV/EHU), Spain.} \email{mallika.roy@upc.edu}

\subjclass{20E05, 20E36, 20K15}

\keywords{right-angled Artin group, Bestvina-Brady group, finiteness property.}

\begin{abstract}
In this note, we characterise when the kernel of a rational character of a right-anlged Artin group, also known as generalised Bestiva-Brady group, is finitely generated and finitely presented. In these cases, we exhibit a finite generating set and a presentation. These results generalise Dicks and Leary's presentations of Bestina-Brady kernels and provide an algebraic proof for the results proven by Meier, Meinert, and VanWyk.
\end{abstract}

\maketitle

\section{Introduction}\label{intro}

%$https://zaguan.unizar.es/record/61862/files/texto_completo.pdf?version=1$

Let $\triangle_\Gamma$ be a non-empty finite flag complex, that is a finite simplicial complex that contains a simplex bounding every complete subgraph of its $1$-skeleton. The associated right-angled Artin
group $G_\Gamma$ is the group given by the presentation with generators the vertex set $V(\triangle_\Gamma)=\{a_1, \dots, a_s\}$, and
relators the commutators $[a_i,a_j]$ for each edge $(a_i,a_j)$ in $\triangle_\Gamma$.

In their influential work \cite{BB}, Bestvina and Brady described the homological finiteness properties of the kernels $H_\Gamma$ of the epimorphisms from $G_\Gamma$ to $\mathbb Z$ that map every standard generator of $G_\Gamma$ to $1$ in terms of the topology of the flag complex $\triangle_\Gamma$. More precisely, they prove that $H_\Gamma$ is finitely presented if and only if $\triangle_\Gamma$ is simply connected and $H_\Gamma$ is of type $FP_{n+1}$ if and only if $\triangle_\Gamma$ is $n$-acyclic.

In \cite{DL}, Dicks and Leary gave an explicit presentation for $H_\Gamma$. The generators in the presentation correspond to the edges $e_i$ of $\triangle_\Gamma$ and in the case when $\triangle_\Gamma$ is simply connected, it is shown that the relations are of the form $e_1^{\epsilon}e_2^{\epsilon}e_3^{\epsilon}$ for each directed $3$-cycles $(e_1, e_2, e_3)$ of $\triangle_\Gamma$ and $\epsilon = \pm 1$. This result gave an independent and purely algebraic proof that $H_\Gamma$ is finitely presented when $\triangle_\Gamma$ is simply connected.

The work of Bestvina and Brady was later extended by Meier, Meinert, and VanWyk. In \cite{MMV}, the authors describe the homotopical and homological $\Sigma$-invariants of $G_\Gamma$ in terms of the topology of subcomplexes of $\triangle_\Gamma$. In particular, their work determines the finiteness properties of kernels of rational characters of $G_\Gamma$.

In this note, we generalise the work of Dicks and Leary and give an algebraic characterisation of when a rational character has finitely generated and finitely presented kernel and in this case, we exhibit a presentation. More precisely, let $\varphi:G_\Gamma \to \ZZ$ be an arbitrary epimorphism. We consider a transverse set $\{ w_m \, \, | \, \, m \in \ZZ\}$ for the kernel $\gh \leqslant G_\Gamma$, where $w_m$ is defined as $a_{1}^{mr_1} \cdots a_{\ell}^{mr_\ell}$ of $G_\Gamma$, where $m\in \ZZ$, $m\ne 0$ and $\varphi(w_m)= m(r_1n_1+\dots+r_\ell n_\ell)=m$ and $w_0=1$.

We consider the induced subcomplex $\triangle_{\Gamma_L}<\triangle_\Gamma$ defined by the set of vertices of $\triangle_\Gamma$ with non zero image, i.e. $V(\triangle_{\Gamma_L})=\{v\in V(\triangle_\Gamma)\mid \varphi(v)\ne 0\}$. We say that $\triangle_\Gamma$ is $0$-acyclic-dominating in $\triangle_\Gamma$, if each vertex in $\triangle_\Gamma$ is either in $\triangle_{\Gamma_L}$ or adjacent to a vertex in $\triangle_{\Gamma_L}$; and it is $1$-acyclic-dominating if for each edge $(a_i, a_j)$ in $\triangle_\Gamma$, either $a_i$ is a vertex in $\triangle_{\Gamma_L}$, or $a_j\in V(\triangle_\Gamma)$, or there exists $a_k\in V(\triangle_{\Gamma_L})$ such that $(a_i, a_k), (a_j,a_k)$ are edges in $\triangle_\Gamma$, see Definition \ref{defn:domination}.

Our main results are the following:

\begin{thm*}[Finite generation, see Theorem \ref{thm:finite_generation}] Let $G_\Gamma$ be a RAAG and let $\varphi:G_\Gamma \to \ZZ$ be a rational character. Then, the kernel $\ker \varphi=\gh$ is finitely generated if and only if $\Gamma$ is connected and $0$-acyclic-dominating. More precisely,
$$
\gh = \langle w_m a_i w_{m + n_i}^{- 1} \mid 0 \leqslant |m| < \ell rN^2 \rangle,
$$
where $N, n_i, r, \ell$ are constants determined by $\varphi$.
\end{thm*}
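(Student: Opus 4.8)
The plan is to run the Reidemeister--Schreier rewriting of $G_\Gamma$ relative to the transversal $\{w_m\mid m\in\ZZ\}$ and then to eliminate the redundant generators using the rewritten commutator relations. Since $\varphi(w_m a_i)=m+n_i$, the transversal representative of the coset $\gh\,w_m a_i$ is $w_{m+n_i}$, so Reidemeister--Schreier gives
\[
\gh=\big\langle\, t_{m,i}:=w_m\,a_i\,w_{m+n_i}^{-1}\ \big|\ m\in\ZZ,\ 1\le i\le s\,\big\rangle,
\]
with a complete set of relators the rewritings of $w_m[a_i,a_j]w_m^{-1}$, one for each edge $(a_i,a_j)$ of $\Gamma$ and each $m$. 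Expanding the telescoping product $t_{m,i}t_{m+n_i,j}=w_m a_ia_j w_{m+n_i+n_j}^{-1}$ and using $a_ia_j=a_ja_i$ produces, as a consequence of these relators, the \emph{sliding relations}
\[
t_{m,i}\,t_{m+n_i,j}=t_{m,j}\,t_{m+n_j,i}\qquad\big((a_i,a_j)\in E(\Gamma),\ m\in\ZZ\big),
\]
which will be the workhorse. Written out in full, a relator $w_m[a_i,a_j]w_m^{-1}$ also contributes extra factors $t_{k,p}^{\pm1}$ coming from the long word $w_m=a_1^{mr_1}\cdots a_\ell^{mr_\ell}$, whose indices $k$ are partial $\varphi$-sums of $w_m$ and hence can be as large as $\approx\ell r N|m|$, where $N=\max_i|n_i|$ and $r=\max_i|r_i|$; we also record that we may and do assume $a_1,\dots,a_\ell$ are living vertices (their $\varphi$-values have gcd $1$, since $\varphi$ is onto).

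For necessity, suppose $\gh$ is finitely generated. If $\Gamma$ is disconnected then $G_\Gamma$ is a nontrivial free product of infinite groups, and a standard Bass--Serre/Kurosh computation shows that $\ker\varphi$ then has an infinitely generated free (or infinite free-product) factor, a contradiction; so $\Gamma$ is connected. If $\Gamma$ is not $0$-acyclic-dominating, choose a vertex $a_i$ with $n_i=0$ all of whose neighbours are also dead. Since $a_i\notin\{a_1,\dots,a_\ell\}$, the letter $a_i$ occurs in a defining relator only through the edges $(a_i,a_j)$ at $a_i$, and for each such edge $n_j=0$, so the corresponding rewritten relator is the commutator $[t_{m,i},t_{m,j}]$, in which $t_{m,i}$ occurs with total exponent $0$. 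Hence in $\gh^{\mathrm{ab}}$ the classes $[t_{m,i}]$, $m\in\ZZ$, span a free abelian direct summand of infinite rank, so $\gh$ cannot be finitely generated. This gives the forward implication.

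For sufficiency and the explicit generating set, assume $\Gamma$ connected and $0$-acyclic-dominating and set $B=\ell r N^2$. It is enough to show $t_{m,i}\in\langle t_{k,p}\mid|k|<B\rangle$ whenever $|m|\ge B$; an induction on $|m|$ then finishes. Fix $m\ge B$ (the case $m\le-B$ being symmetric). First rewrite $t_{m,i}=w_m a_i w_{m+n_i}^{-1}$ by Reidemeister--Schreier as a word in the $t_{k,p}^{\pm1}$; the indices occurring are partial $\varphi$-sums of $w_m a_i w_{m+n_i}^{-1}$, hence of absolute value $\le\ell r N(m+N)$. Then fold these indices back into the window $(-B,B)$: for each vertex $a_p$, $0$-acyclic-domination furnishes $a_q\in\{a_p\}\cup\mathrm{lk}(a_p)$ with $n_q\ne0$, and the sliding relation of the edge $(a_p,a_q)$ (trivially so if $a_q=a_p$) rewrites $t_{k,p}$ as a product of generators, one $p$-indexed one of which has index $k-n_q$ while the $q$-indexed ones have index within $N$ of $k$; iterating this move (and its mirror version, lowering instead of raising), and using that $\triangle_{\Gamma_L}$ is joined up by paths of the connected graph $\Gamma$, one steadily drives every index towards $0$. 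The point of the value $B=\ell r N^2$ is that the initial excursions, which grow like $\ell r N$ per unit of $m$, are reabsorbed by this folding without any index ever leaving $(-B,B)$; keeping track of the shifts then yields $t_{m,i}\in\langle t_{k,p}\mid|k|<B\rangle$, so $\gh=\langle t_{m,i}\mid 0\le|m|<\ell r N^2\rangle$.

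The delicate part is precisely this last folding argument. The naive expansion of $t_{m,i}$ inflates indices linearly in $m$, and one must prove that the sliding relations, applied in the correct order and guided by the short routes from each vertex into $\triangle_{\Gamma_L}$ provided by connectedness and $0$-acyclic-domination, return every index to the fixed interval $(-\ell r N^2,\ell r N^2)$. Establishing this with the explicit constant, while juggling the constants $n_i$, $r$, $\ell$, is the technical heart of the proof; it extends the combinatorics of Dicks and Leary, and is made substantially more intricate here by the fact that the $n_i$ need not all equal $1$ and that $w_m$ is the collected word $a_1^{mr_1}\cdots a_\ell^{mr_\ell}$ rather than a power of a fixed element.
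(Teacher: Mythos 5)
Your setup (the Reidemeister--Schreier rewriting over the transversal $\{w_m\}$ and the relations coming from $w_m[a_i,a_j]w_m^{-1}$) coincides with the paper's starting point, and your abelianization argument for the failure of $0$-acyclic-domination is a plausible alternative to the paper's route. But there are two genuine gaps. First, the sufficiency direction is not actually proved: the ``folding'' of large indices back into the window $(-\ell rN^2,\ell rN^2)$ \emph{is} the content of the theorem, and you explicitly defer it as ``the technical heart of the proof.'' The paper does not argue index-by-index with sliding moves; it divides each exponent $mr_j$ by $N$ (the \emph{least common multiple} of the $n_i$ over living vertices --- not the maximum, as you take; divisibility is what makes the exponents $N/n_j$ integral) with remainder, rewrites $w_ma_iw_{m+n_i}^{-1}$ as a product of terms $w_xa_i^{y}w_{x+n_iy}^{-1}$ and conjugated weighted paths $w_x\,p_{j,k}^{t}\,w_x^{-1}$ with $|x|<\ell rN^2$, and then uses the telescoping identity of Lemma \ref{lem:edges}, $e_{j,k}^{N_{jk}}=x_{0,j}x_{n_j,j}\cdots x_{N_{jk}-n_j,j}\,x_{N_{jk}-n_k,k}^{-1}\cdots x_{0,k}^{-1}$, to express each weighted edge, hence each weighted path, in the small-index generators. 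Without some such mechanism your induction has no engine: a sliding move along an edge $(a_p,a_q)$ lowers the $p$-index by $n_q$ but reintroduces $q$-indexed generators at essentially the same height, and sliding along a dead vertex ($n_q=0$) lowers nothing, so it is not established that every index ever reaches the window, let alone with the stated constant.

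Second, you prove and use connectivity of the wrong graph. The hypothesis in Theorem \ref{thm:finite_generation} is that the living subgraph $\Gamma_L$ is connected, not $\Gamma$: your folding invokes ``paths of the connected graph $\Gamma$,'' and your necessity argument only shows $\Gamma$ is connected. These are not equivalent, and the statement with $\Gamma$ in place of $\Gamma_L$ is false: for the path $a_1-b-a_2$ with $\varphi(a_1)=\varphi(a_2)=1$ and $\varphi(b)=0$, the graph $\Gamma$ is connected and every dead vertex has a living neighbour, yet $G_\Gamma\cong F_2\times\ZZ$ and $\ker\varphi$ is not finitely generated. Hence any correct folding must route through paths inside $\Gamma_L$; this is exactly where the paper uses connectedness of $\Gamma_L$ (existence of the paths $p_{k,k+1}$ between consecutive base vertices $a_k$) and $0$-acyclic-domination (a living vertex $a_j$ adjacent to $a_i$, allowing the insertion of $p_{\ell,j}^{Nd}$ around $a_i$). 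On the necessity side, the paper handles disconnectedness of $\Gamma_L$ and failure of domination uniformly, by retracting $G_\Gamma$ onto $G_\Upsilon$ for a suitable induced subgraph $\Upsilon\supseteq\Gamma_L$ which is a nontrivial free product and invoking Baumslag's theorem that finitely generated nontrivial normal subgroups of free products have finite index; your Kurosh sketch for disconnected $\Gamma$ does not cover the case where $\Gamma$ is connected but $\Gamma_L$ is not.
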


\begin{thm*}[Finite presentation, see Theorem \ref{thm:finite_presentation}]
Let $\triangle_{\Gamma_L}$ be simply connected and $1$-acyclic-dominating. Then, the kernel $\ker \varphi=\gh$ has the following finite presentation 
    $$
    \langle x_{m,i} \, \, \mid \, \, R_1', R_2, R_3\rangle, 
    $$
    where $x_{k,i}=w_k a_i w_{k+n_i}^{-1}$, $0 \leqslant |m| < \ell rN^2$, and the relations $R_1', R_2, R_3$ are defined as follows:  
\begin{itemize}
\item[$R_1'$:] for any directed $3$-cycle $(a_{i_1}, a_{i_2}, a_{i_3})$,
\[
 \tau(w_s
 e_{i_1,i_2}^{Nq} e_{i_2,i_3}^{Nq} e_{i_{3},i_1}^{Nq}  w_s^{-1}), \hspace{0.2cm} 0 \leqslant |s| < N, q=\pm 1;
\]
\item[$R_2$:] $\tau(w_s [a_i, a_j] w_s^{-1})=1, \hspace{0.2cm} 0 \leqslant |s| < N$, where $(a_i, a_j) \in E(\Gamma)$.
\item [$R_3$:] $x_{t,i}=\tau(w_t a_i (w_{t+n_i})^{-1})$ for $0\leqslant |t| < \ell^2r^2N^3$.
\end{itemize}
The map $\tau$ is a Reidemeister rewriting process, see Definition \ref{defn:Reidemeister_rewriting}, and $N, n_i, r, \ell$ are constants determined by $\varphi$.
\end{thm*}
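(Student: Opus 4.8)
The plan is to run the Reidemeister--Schreier process on the standard presentation $G_\Gamma=\langle a_1,\dots,a_s\mid [a_i,a_j],\ (a_i,a_j)\in E(\Gamma)\rangle$ relative to the transverse set $T=\{w_m\mid m\in\ZZ\}$, $w_0=1$, and then perform two finiteness reductions. Applying the rewriting $\tau$ of Definition~\ref{defn:Reidemeister_rewriting} yields a (generically infinite) presentation $\gh=\langle x_{m,i}\ (m\in\ZZ,\ 1\le i\le s)\mid A,\ B\rangle$, where $A$ is the family of rewritten conjugated relators $\tau(w_m[a_i,a_j]w_m^{-1})=1$ over all $m\in\ZZ$ and all edges, and $B$ is the family of transversal relations $x_{m,i}=\tau(w_m a_i w_{m+n_i}^{-1})$ forced by the fact that $T$ is not a Schreier transversal: spelling $w_k$ out as $a_1^{kr_1}\cdots a_\ell^{kr_\ell}$, the rewrite on the right need not visibly collapse to the single symbol $x_{m,i}$. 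The first reduction will trim the generators to the indices $0\le|m| < \ell rN^2$; the second will trim the relators to the finite families $R_1'$, $R_2$, $R_3$. (One could instead subdivide $\Gamma$ so that $\varphi$ becomes a Bestvina--Brady character and invoke Dicks--Leary, but the Reidemeister--Schreier route is the one that produces $\tau$ and the generators $x_{m,i}$ directly.)

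For the generating set, Theorem~\ref{thm:finite_generation} already shows $\gh$ is generated by $\{x_{m,i}\mid 0\le|m| < \ell rN^2\}$, and its proof exhibits, for each $i$ and each $|m|\ge\ell rN^2$, an explicit word in these bounded generators equal to $x_{m,i}$, obtained by iterating the relations in $B$ and $A$ over a bounded window of indices. Running these identities as Tietze transformations removes every generator of large index; the relations of $B$ that are left over, now equalities between two words in the surviving generators, are precisely the relations $R_3$, with $|t| < \ell^2 r^2 N^3$ being the index at which the elimination stabilises (a relation of $B$ of larger index is a consequence of those below the bound together with the eliminations already carried out). Bounding the length growth of the rewrites here is routine bookkeeping.

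For the relators, where the topology of $\triangle_{\Gamma_L}$ enters, one must show that all of $A$ follows from $R_2$ (the relators of $A$ with $0\le|s| < N$) together with $R_1'$. Since $\gh$ is normal in $G_\Gamma$ and $\varphi(w_1)=1$, conjugation by $w_1$ is an automorphism $\theta$ of $\gh$ which, up to correction terms $c_m=w_{m+1}^{-1}w_1w_m\in\gh$, shifts the index by one; iterating $\theta^{\pm1}$ and organising indices into bands of width $N$ (the periodicity of $\tau$) brings every relator of $A$ into the band $0\le|s| < N$, modulo relations recording the $c_m$. The remaining point is that, within a fixed band, a relator of $A$ that is not already a consequence of $R_2$ corresponds to a loop that, by simple connectedness of $\triangle_{\Gamma_L}$, bounds a disc; each triangular face of such a disc, once pushed through $\tau$ and truncated to $0\le|s| < N$ and $q=\pm1$, is exactly a directed $3$-cycle relation $R_1'$. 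The $1$-acyclic-dominating hypothesis (Definition~\ref{defn:domination}) disposes of the edges of $\triangle_\Gamma$ not lying in $\triangle_{\Gamma_L}$: each such edge is dominated by a vertex of $\triangle_{\Gamma_L}$, so its commutator relation is already a consequence of those supported on $\triangle_{\Gamma_L}$. Assembling the three reductions gives the presentation $\langle x_{m,i}\mid R_1',R_2,R_3\rangle$.

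I expect this last step to be the main obstacle: making precise the dictionary between relators of the infinite Reidemeister--Schreier presentation and loops in $\triangle_{\Gamma_L}$, and verifying that ``filling a disc'' there translates, after applying $\tau$, into a product of $R_1'$-relations that stays within the allowed index range. The non-Schreier transversal --- which makes $\tau$ produce long words and is the reason one needs the whole family $R_3$ --- and the need to pin down the constants $\ell$, $r$, $N$ precisely are the principal sources of technical overhead.
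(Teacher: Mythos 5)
Your overall skeleton matches the paper's: the Reidemeister--Schreier presentation over the transversal $\{w_m\}$, the trimming of generators to $0\leqslant |m|<\ell rN^2$ via Theorem \ref{thm:finite_generation} and Tietze moves, and the retention of the leftover transversal identities as $R_3$ with the bound $\ell^2r^2N^3$ are all as in the paper. The genuine gap is in the only step where $1$-acyclic-domination actually matters: showing that every relator $\tau(w_M[a_i,a_j]w_M^{-1})$, $M\in\ZZ$, lies in the normal closure of $R_1'\cup R_2$ (together with $R_3$). Your proposed mechanism --- treating conjugation by $w_1$ as a shift automorphism $\theta$ of $\gh$ and iterating it ``modulo correction terms $c_m=w_{m+1}^{-1}w_1w_m$'' to push every relator into the band $0\leqslant |s|<N$ --- is not a valid normal-closure argument as stated: what must be shown is that the normal closure of the \emph{finite} relator set inside the free group on the $x_{m,i}$ is preserved under this shift, and that is precisely the content of the theorem, not something one can assume; moreover the correction terms $c_m$ have unbounded index and expressing them in the bounded generators requires exactly the machinery you have left unspecified. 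The paper instead argues by explicit computation: writing $M=kN+s$, it rewrites $w_{kN+s}[a_i,a_j]w_{kN+s}^{-1}$ using weighted paths in $\Gamma_L$ (this uses connectedness) and a vertex $a_d\in V(\Gamma_L)$ adjacent to \emph{both} $a_i$ and $a_j$ (this is where $1$-acyclic-domination enters), so that the $kN$-sized conjugating block commutes past $[a_i,a_j]$ and closes up into a weighted cycle $C_d^k$; after conjugation by elements of $\gh$ (which is legitimate for normal closures), the relator becomes $(w_s[a_i,a_j]w_s^{-1})(w_sC_d^kw_s^{-1})$, i.e.\ an $R_2$ relator times a cycle relator, and only then does simple connectedness of $\triangle_{\Gamma_L}$ reduce arbitrary weighted cycles and arbitrary $q$ to directed $3$-cycles with $q=\pm 1$, following \cite[Proposition 2]{DL} (this is the intermediate Theorem \ref{thm:presentation}, which your proposal skips).

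A second, related error: you assert that $1$-acyclic-domination ``disposes of'' edges of $\triangle_\Gamma$ not in $\triangle_{\Gamma_L}$ because their commutator relations become consequences of relations supported on $\triangle_{\Gamma_L}$. That is not the case and not how the hypothesis is used: the commutator relators of \emph{all} edges of $\Gamma$, including those outside $\Gamma_L$, are retained in $R_2$ (conjugated by $w_s$, $0\leqslant |s|<N$); domination is used only to supply, for each such edge, a common neighbour $a_d$ in $\Gamma_L$ through which the high-index conjugator $w_{kN}$ can be discharged into a weighted cycle killed by $R_1'$. So the disc-filling picture you sketch is the right idea for the cycle-to-triangle reduction, but the reduction from arbitrary conjugation depth $M$ to the band $|s|<N$ --- the heart of the proof --- is missing and would not follow from the shift argument as you describe it.
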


In particular, we recover the following presentation given by Dicks and Leary for Bestvina-Brady groups:

\begin{cor}[cf. Proposition 2 and Corollary 3 of \cite{DL}]
Let $\triangle_\Gamma$ be simply connected and let $H_\Gamma$ be the Bestvina-Brady subgroup of $G_\Gamma$. Then $H_\Gamma$ admits the finite presentation
$$
    \langle x_{0,i} \, \, \mid \, \, \tau(e_{i_1,i_2}^{q} e_{i_2,i_3}^{q} e_{i_3,i_1}^{q}) \rangle, 
    $$
    where $x_{0,i}= a_i a_1^{- 1}$, $a_i\in V(\Gamma)$, $e_{i_r,i_s}=a_{i_r}a_{i_s}^{-1}$ and $e_{i_1,i_2} e_{i_2,i_3} e_{i_3,i_1}$ a directed $3$-cycle, $q=\{\pm 1\}$.
\end{cor}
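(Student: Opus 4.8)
The strategy is to specialise the Finite Presentation Theorem to the character $\varphi\colon G_\Gamma\to\ZZ$ sending every standard generator to $1$, and then simplify the resulting presentation using the rigidity this special choice imposes on all the constants. First I would record the values of the data attached to $\varphi$: here $\ell = s$, each $r_i = 1$ and each $n_i = \varphi(a_i) = 1$, so the transverse word is $w_m = a_1^m\cdots a_s^m$ (and in particular one checks $w_1 = a_1\cdots a_s$ is \emph{not} needed — only $w_0 = 1$ survives in the relations because $N = 1$). Indeed the key numerical observation is that $N$, defined from $\varphi$, equals $1$ in this case (all coefficients $n_i$ are $1$, so there is no ``denominator'' to clear), hence $r = 1$ and the ranges $0\le |s| < N$, $0\le |m| < \ell r N^2$, $0\le |t| < \ell^2 r^2 N^3$ all collapse to the single index $0$. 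Thus $\triangle_{\Gamma_L} = \triangle_\Gamma$, the hypothesis ``$\triangle_{\Gamma_L}$ simply connected and $1$-acyclic-dominating'' reduces to ``$\triangle_\Gamma$ simply connected'' (a simply connected complex trivially dominates itself), and the generators $x_{m,i}$ reduce to $x_{0,i} = w_0 a_i w_{0+1}^{-1} = a_i w_1^{-1}$.

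Next I would identify these generators with the Dicks–Leary generators $a_i a_1^{-1}$. Since $w_1 = a_1 a_2\cdots a_s$ in $G_\Gamma$ and $\gh = H_\Gamma$ is normal, conjugating and using the defining commutator relations of $G_\Gamma$, one has $a_i w_1^{-1} = a_i a_s^{-1}\cdots a_2^{-1} a_1^{-1}$, and a short Reidemeister-rewriting computation (or a Tietze transformation eliminating redundant generators using the relations $R_2$, which for $N=1$ are exactly the conjugates $\tau([a_i,a_j])$ of the defining relators of $G_\Gamma$ lying in $H_\Gamma$) shows that the subgroup generated by the $x_{0,i}$ is freely regenerated by the elements $e_{i,1} := a_i a_1^{-1}$; more symmetrically, every $e_{i,j} = a_i a_j^{-1}$ is expressed as $x_{0,i}x_{0,j}^{-1}$ up to rewriting. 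I would then check that under this change of generators the relation families transform as claimed: $R_1'$, which for $N=1$, $q=\pm1$ reads $\tau(w_0\, e_{i_1,i_2}^{q} e_{i_2,i_3}^{q} e_{i_3,i_1}^{q}\, w_0^{-1}) = \tau(e_{i_1,i_2}^{q} e_{i_2,i_3}^{q} e_{i_3,i_1}^{q})$ for each directed $3$-cycle, becomes precisely the stated relator; the relations $R_2$ become consequences of the $R_1'$ together with the generator identifications (this is the point where the Dicks–Leary observation that $3$-cycle relations already encode all commutator relations is used — equivalently, $R_2$ and $R_3$ are absorbed by Tietze moves once $\triangle_\Gamma$ is flag and simply connected); and $R_3$, $x_{t,i} = \tau(w_t a_i w_{t+n_i}^{-1})$, is vacuous since the only value of $t$ is $0$ and that is the definition of the generator.

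The last step is to confirm that no relators are lost in the collapse: I would verify that with $N=1$ the presentation of the Finite Presentation Theorem has exactly the generating set $\{x_{0,i}\}$ and exactly the relators $\{\tau(e_{i_1,i_2}^{q}e_{i_2,i_3}^{q}e_{i_3,i_1}^{q})\}$ after eliminating $R_2$ and $R_3$, and then cite that this is word-for-word the presentation of Proposition 2 / Corollary 3 of \cite{DL} (after the harmless relabelling $x_{0,i}\leftrightarrow a_ia_1^{-1}$, $e_{i_r,i_s}\leftrightarrow a_{i_r}a_{i_s}^{-1}$). The main obstacle I anticipate is purely bookkeeping: making the identification of $x_{0,i}$ with $a_ia_1^{-1}$ fully rigorous requires tracking the Reidemeister rewriting $\tau$ through the word $w_1$ and checking that the rewritten forms of the $R_2$-relators are exactly what is needed to pass between the two generating sets via Tietze transformations; once that dictionary is in place the rest is immediate from the theorem. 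There is no deep content here beyond the theorem itself — the corollary is a consistency check that the general machinery degenerates correctly to the known case.
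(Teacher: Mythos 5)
Your overall strategy---specialising Theorem \ref{thm:finite_presentation} to the canonical character and watching the constants collapse---is exactly the route the paper takes, but your choice of the transversal word invalidates the specialisation. The setting requires $w=a_1^{r_1}\cdots a_\ell^{r_\ell}$ with $\varphi(w)=n_1r_1+\cdots+n_\ell r_\ell=1$. With $n_i=1$ for all $i$ and your choice $\ell=s$, $r_i=1$, you get $\varphi(w)=s$, so your proposed representatives $w_m=a_1^m\cdots a_s^m$ satisfy $\varphi(w_m)=ms$ and do \emph{not} form a transversal of $H_\Gamma$ unless $s=1$; the Reidemeister--Schreier data of the theorem simply is not attached to this $w$. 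Moreover your numerology is internally inconsistent: with $\ell=s$, $r=1$, $N=1$ the bound $0\leqslant |m|<\ell r N^2$ equals $s$, so the index range does not collapse to the single value $0$ as you claim; you would retain generators $x_{m,i}$ for all $|m|<s$, a nonvacuous family $R_3$ (range $|t|<s^2$), and you would then still need the Tietze/rewriting argument identifying $a_iw_1^{-1}$ with $a_ia_1^{-1}$, which you only sketch.

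The paper sidesteps all of this by choosing $w=a_1$ (legitimate because $\varphi(a_1)=1$), so that $\ell=r=N=1$, $w_m=a_1^m$, the ranges $0\leqslant|m|<\ell rN^2$ and $0\leqslant|t|<\ell^2r^2N^3$ genuinely reduce to the single index $0$, $R_3$ becomes the definition of the generators, and $x_{0,i}=a_ia_1^{-1}$ on the nose, so the presentation of Theorem \ref{thm:finite_presentation} is literally the one stated (with the $q=\pm1$ triangle relations absorbing the commutations, as in the proof of that theorem via \cite{DL}). Your argument becomes correct, and essentially identical to the paper's, once you replace your $w$ by $a_1$; as written, the claimed collapse is false for $s>1$ and the transversal is not one to which the theorem applies.
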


\section{Preliminaries and Notation}\label{prim-nt}
\subsection{Graph theory} We recall basic definitions and fix basic notation from graph theory. 
%Our main reference is [7]. 
Throughout this paper, we consider only finite simplicial graphs, i.e. finite graphs that have no loops and multi-edges.  Given a graph $\Gamma$, we denote the set of its vertices and edges by $V(\Gamma)$ and $E(\Gamma)$, respectively. We denote by $e=(a_i, a_j)$ an edge connecting vertices $a_i$ and $a_j$ and we say that the vertices $a_i$ and $a_j$ are \emph{adjacent}. Sometimes, we consider oriented edges and in this case, we denote by $\iota(e)$ and $\tau(e)$, the initial and the terminal vertices of the edge $e$, i.e. if $e=(a_i, a_j)$ is an oriented edge, then $e=(\iota(e), \tau(e))$.

Given any subset $V'$ of $V(\Gamma)$, the \textit{induced subgraph} (or full subgraph) on $V'$ is a graph $\Gamma'$ whose vertex set is $V'$, and two vertices are adjacent in $\Gamma'$ if and only if they are adjacent in $\Gamma$. 

The \textit{flag complex} defined by $\Gamma$ and denoted by $\triangle_\Gamma$, is a finite simplicial complex whose $0$ and $1$ skeleton is $\Gamma$ and $\triangle_\Gamma$ contains an ($n$)-simplex bounding each complete induced subgraph (with $n$-vertices) of its 1-skeleton $\Gamma$.

%A \textit{weighted graph} denoted by $\widehat{\Gamma}$ is a graph in which every vertex $x_i$ is assigned with an integer $n_i$ (in infinite cyclic context) or with a tuple $\overline{n}_i \colon = (n^{(1)}_i, n^{(2)}_i, \ldots, n^{(k)}_i)$ (in free-abelian of rank $k$ context). the component-wise difference of the tuples will be denoted by $\overline{n}_{i,j}$ i.e., $\overline{n}_{i,j} \colon = (n^{(1)}_i-n^{(1)}_j, n^{(2)}_i-n^{(2)}_j, \ldots, n^{(k)}_i-n^{(k)}_j)$.

\subsection{Bestvina--Brady groups}\label{bb_intro}

\begin{defn}
Let $\Gamma$ be a finite simplicial graph with the vertex set $V(\Gamma)$ and the edge set $E(\Gamma)$. 
The right-angled Artin group $G_{\Gamma}$ associated to $\Gamma$ has the following finite presentation:
\[
G_\Gamma = \bigl\langle V(\Gamma) \mid [a_i,a_j]=1 \text{ for each edge } (a_i,a_j) \in E(\Gamma) \bigr\rangle.
\]
Let $\varphi \colon G_\Gamma \rightarrow \ZZ$ be the group homomorphism sending all generators of $G_\Gamma$ to the generator $1$ of $\ZZ$. We call this epimorphism \emph{the canonical epimorphism} (from $G_\Gamma$ to $\ZZ$). The \textit{Bestvina-Brady group} $H_\Gamma$ associated with $\Gamma$ is the kernel $\ker \varphi$ of the canonical epimorphism $\varphi$. 
\end{defn}

As we mentioned in~\Cref{intro}, the Bestvina-Brady groups were first introduced in the seminal work of Bestvina and Brady~\cite{BB} as an answer to a long-standing open question regarding the existence of non-finitely presented groups of type \textbf{FP}---a result based on homological group theory. More precisely, the authors proved the following 

\begin{thm}[\cite{BB}, Main Theorem]
Let $\triangle_\Gamma$ be a finite simplicial flag complex. Then
\begin{itemize}
    \item[(1)] $H_\Gamma$ is finitely generated if and only if $\triangle_\Gamma$ is connected.
    \item[(2)] $H_\Gamma$ is finitely presented if and only if $\triangle_\Gamma$ is simply-connected.
    \item [(3)] $H_\Gamma$ is of type $\textbf{FP}_{n+1}$ if and only if $\triangle_\Gamma$ is $n$-acyclic.
\end{itemize}
\end{thm}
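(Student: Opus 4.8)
The plan is to prove all three statements uniformly via Bestvina and Brady's discrete Morse theory on the CAT(0) cube complex attached to $G_\Gamma$. First I would recall that the universal cover $X$ of the Salvetti complex of $G_\Gamma$ is a contractible CAT(0) cube complex on which $G_\Gamma$ acts freely, properly and cocompactly by cubical isometries, with a single orbit of vertices. The canonical epimorphism $\varphi$ then extends to a Morse function $f\colon X \to \mathbb{R}$, obtained by declaring $f$ to be affine on each cube and $\varphi$-equivariant, so that $f(g\cdot x) = \varphi(g) + f(x)$; one checks that $f$ is a genuine Morse function in the sense of Bestvina--Brady, i.e. on every cube it attains its maximum and its minimum at unique vertices. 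Since $H_\Gamma = \ker\varphi$, the subgroup $H_\Gamma$ preserves every level set $X_t := f^{-1}(t)$ and every block $X_{[a,b]} := f^{-1}([a,b])$, and it acts freely and cocompactly on each such block.

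The heart of the argument is the identification of the ascending and descending links. For every vertex $v$ of $X$ I would show that the ascending link $\mathrm{lk}^{\uparrow}(v)$ and the descending link $\mathrm{lk}^{\downarrow}(v)$ are both simplicially isomorphic to the flag complex $\triangle_\Gamma$; this is exactly where the topology of $\triangle_\Gamma$ enters, since all local connectivity of the level sets is read off from these links. With this in hand I would invoke the Morse Lemma: passing from a block $X_{[a,b]}$ to a larger block $X_{[a',b']}$ changes the homotopy type only by coning off ascending links (for the newly included vertices below the block) and descending links (for the newly included vertices above it). Consequently, if every ascending and descending link is $(n-1)$-connected (respectively $n$-acyclic), then the inclusion of any block into $X$ is $n$-connected (respectively an $\widetilde{H}_k$-isomorphism for $k \leq n$), and since $X$ is contractible each block is itself $(n-1)$-connected (respectively $n$-acyclic).

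For the \emph{if} directions I would combine this with the standard topological finiteness criteria: $H_\Gamma$ is of type $F_n$ (respectively $FP_n$) whenever it acts freely and cocompactly on an $(n-1)$-connected (respectively $(n-1)$-acyclic) complex, and the blocks $X_{[-t,t]}$ furnish precisely such an $H_\Gamma$-cocompact filtration of the contractible space $X$. Thus, if $\triangle_\Gamma$ is connected the links are $0$-connected, the blocks are connected, and $H_\Gamma$ is finitely generated, giving the easy half of (1); if $\triangle_\Gamma$ is simply connected the links are $1$-connected, the blocks are simply connected, and $H_\Gamma$ is finitely presented, giving (2); and if $\triangle_\Gamma$ is $n$-acyclic the links are $n$-acyclic, the blocks are $n$-acyclic, and the homological Brown criterion yields type $FP_{n+1}$, giving (3). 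Here statement (2) genuinely requires the homotopical simple-connectivity of $\triangle_\Gamma$ rather than merely $\widetilde{H}_1=0$, which is why it is treated separately from the homological statement.

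The converse directions are the delicate part and would be handled through Brown's criterion in its sharp form: $H_\Gamma$ is of type $F_n$ (respectively $FP_n$) if and only if the filtration $\{X_{[-t,t]}\}_t$ is essentially $(n-1)$-connected (respectively essentially $(n-1)$-acyclic), meaning that for each $t$ there is $t' \geq t$ for which the map induced on $\pi_k$ (respectively $\widetilde{H}_k$), $k \leq n-1$, by $X_{[-t,t]} \hookrightarrow X_{[-t',t']}$ is trivial. When $\triangle_\Gamma$ fails to be $(n-1)$-connected (respectively $n$-acyclic), a nontrivial class in the top nonvanishing $\pi_{\ast}$ (respectively $\widetilde{H}_{\ast}$) of a link persists: each enlargement of the block past a critical vertex cones off one copy of this class while simultaneously creating a fresh copy, so the relevant homotopy or homology of the blocks never stabilises and the filtration fails to be essentially connected or acyclic in the critical degree. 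Making this persistence precise---tracking that the obstruction classes injected by the honest copies of $\triangle_\Gamma$ at each level are not eventually killed as the blocks grow---is the main obstacle, and it is exactly here that the full strength of Brown's criterion, together with the identification of every link with $\triangle_\Gamma$, must be used.
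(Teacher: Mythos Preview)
The paper does not prove this theorem at all: it is quoted as background from Bestvina--Brady \cite{BB}, and your sketch is essentially an outline of their original Morse-theoretic argument, so in that sense your proposal is faithful to the source the paper cites rather than to anything the paper itself does.

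That said, there is a meaningful comparison to make. The paper's own contributions (Theorems~\ref{thm:finite_generation} and~\ref{thm:finite_presentation}, specialised via Corollary~\ref{cor:fg_BB} and the corollaries following Theorems~\ref{thm:presentation} and~\ref{thm:finite_presentation}) give independent, purely algebraic proofs of the \emph{if} directions of (1) and (2), by writing down explicit Reidemeister--Schreier generators and relations and showing by hand that the infinite families collapse to finite ones. This is a genuinely different route from the Morse theory you describe: there is no cube complex, no ascending/descending links, and no Brown criterion; instead one manipulates words in the coset representatives $w_m$ and controls the indices $m$ appearing in the generators $x_{m,i}$. What your approach buys is uniformity---all three statements, both directions, in one stroke---and the full homological statement (3); what the paper's approach buys is an explicit presentation and a method that extends to arbitrary rational characters, at the cost of treating only the positive halves of (1) and (2) and saying nothing about (3) or the converses.

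One remark on your sketch itself: the converse direction you outline is the genuinely hard part of \cite{BB}, and your description (``a nontrivial class persists because each enlargement cones off one copy while creating a fresh one'') is morally right but hides real work. In the original argument one must construct explicit ``sheets'' carrying the obstruction and show they survive in the direct limit; simply invoking Brown's criterion and asserting non-stabilisation is not quite a proof. If you intend this as more than a plan, that step needs to be fleshed out.
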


This result generalised J. Stallings' example of a finitely presented but not of type $\textbf{FP}_3$ given as the kernel $H_\Gamma$ for the canonical epimorphism from $F_2 \times F_2 \times F_2$ to $\ZZ$, see \cite{S}; and R. Bieri's group of type $\textbf{FP}_n$ but not of type $\textbf{FP}_{n+1}$, which is the kernel $H_\Gamma$ of the canonical epimorphism from $\prod_{i=1, \dots, n+1} F_2^{(i)}$ to $\ZZ$, see \cite{B}.

In \cite{DL}, Dicks--Leary, gave an algebraic proof of the fact that if $\triangle_\Gamma$ is simply-connected, then the Bestvina-Brady group is finitely presented and exhibited an explicit presentation. Their results can be summarised in the following:

\begin{thm}[\cite{DL}, Theorem 1]
Let $\triangle_\Gamma$ be connected. The group $H_\Gamma$ has a presentation with generators the set of directed edges of $\Gamma$, and relators all words of the form $e^n_1 e^n_2 \cdots e^n_{\ell}$, where $\ell,n \in \ZZ, n \geqslant 0,\ell \geqslant 2$, and $(e_1, \ldots,e_\ell)$ is a directed cycle in $\Gamma$.  
 
Furthermore, if the flag complex $\triangle_\Gamma$ is simply connected. Then $H_\Gamma$ has the following finite presentation
\begin{equation*}
H_\Gamma = \langle e \in E(\Gamma) \mid ef = fe, ef = g \text{ if }\triangle(e, f, g) \text{ is a directed triangle } \rangle,
\end{equation*}
where the inclusion $f \colon H_\Gamma \hookrightarrow A_\Gamma$ is given by $f({e}) = a_ia^{-1}_j$ for every edge ${e} = (a_i, a_j)$ of $\Gamma$.
\end{thm}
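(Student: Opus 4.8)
The plan is to extract both presentations from a single Reidemeister--Schreier computation and then to use the topology of $\triangle_\Gamma$ only to control the ensuing Tietze transformations. Since $\varphi(a_1)=1$, the cyclic group $\langle a_1\rangle$ splits $\varphi$, so $G_\Gamma=H_\Gamma\rtimes\langle a_1\rangle$ and $T=\{a_1^{\,n}\mid n\in\ZZ\}$ is a Schreier transversal for $H_\Gamma$ (one has $\bar g=a_1^{\varphi(g)}$). The nontrivial Schreier generators are then $s_{n,i}:=a_1^{\,n}a_i a_1^{-n-1}$ for $n\in\ZZ$, $2\le i\le s$ (with the convention $s_{n,1}=1$), and rewriting the conjugate $a_1^{\,n}[a_i,a_j]a_1^{-n}$ of the defining relator $[a_i,a_j]$ yields the ``shifted square'' $s_{n,i}\,s_{n+1,j}\,s_{n+1,i}^{-1}\,s_{n,j}^{-1}$. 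Hence
\[
H_\Gamma=\bigl\langle\, s_{n,i}\ \big|\ s_{n,i}\,s_{n+1,j}=s_{n,j}\,s_{n+1,i}\ \text{ for }(a_i,a_j)\in E(\Gamma),\ n\in\ZZ \,\bigr\rangle ,
\]
and setting $j=1$ shows $s_{n,i}$ is independent of $n$ whenever $a_i$ is adjacent to $a_1$.

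For the first assertion ($\triangle_\Gamma$ connected) I would put $e_{ij}:=a_ia_j^{-1}=s_{0,i}s_{0,j}^{-1}$ for each directed edge $(a_i,a_j)$, and more generally $\varepsilon_{ij}:=a_ia_j^{-1}$ for all $i,j$, so that the identities $\varepsilon_{ij}\varepsilon_{jk}=\varepsilon_{ik}$ hold by construction. Using a path $a_1=v_0,v_1,\dots,v_r=a_i$ in $\Gamma$ (available by connectivity) and solving, in the variable $n$, the recursion furnished by the relation $s_{n,i}s_{n+1,j}=s_{n,j}s_{n+1,i}$, one shows by induction on $r$ that, modulo these relations, every $s_{n,i}$ equals an explicit word in the $e_{ij}$ --- the base case being $s_{0,i}=\varepsilon_{i1}=e_{i\,v_{r-1}}\cdots e_{v_1\,1}$ read along the path. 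Tietze-eliminating the $s_{n,i}$ leaves a presentation of $H_\Gamma$ on the finitely many directed edges of $\Gamma$. It then remains to identify the transformed relators with the directed-cycle words. On one side, each word $e_1^{\,n}e_2^{\,n}\cdots e_\ell^{\,n}$ attached to a directed cycle $(e_1,\dots,e_\ell)$ already represents $1$ in $G_\Gamma$ (its factors telescope in every coordinate direction), hence is a relator of $H_\Gamma$; on the other, the substituted image of each shifted square is a word in the $e_{ij}$ representing $1$ in $H_\Gamma$, and one checks it lies in the normal closure of the cycle words. The cleanest way to organise this last step is to set $P:=\langle\,\text{directed edges of }\Gamma\ \mid\ \text{cycle words}\,\rangle$, observe that ``conjugation by $a_1$'' determines an automorphism of $P$ (well-definedness being checked only against the cycle words), and verify by comparing presentations that $P\rtimes\ZZ\cong G_\Gamma$ with the two copies of $\ZZ$ identified; together with the surjectivity of $P\twoheadrightarrow H_\Gamma$ and $G_\Gamma=H_\Gamma\rtimes\ZZ$, this forces $P\cong H_\Gamma$.

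For the second assertion ($\triangle_\Gamma$ additionally simply connected) I would start from $\langle\,\text{directed edges}\mid\text{cycle words}\,\rangle$. The commutators $ef=fe$ for directed edges $e,f$ of a common simplex of $\triangle_\Gamma$ are relators (the vertices involved commute pairwise in $G_\Gamma$), the triangle relations $ef=g$ for each directed $2$-simplex are relators (again by telescoping), and the length-$2$ cycle words $e\bar e=1$ let one identify $\bar e$ with $e^{-1}$ and pass to one generator per undirected edge. Conversely, every cycle word $e_1^{\,n}\cdots e_\ell^{\,n}$ follows from these: by simple connectivity the loop $e_1\cdots e_\ell$ in $\triangle_\Gamma^{(1)}$ bounds a disc $D$ triangulated by $2$-simplices, and one induces on the number of triangles of $D$, peeling off a $2$-simplex $\sigma$ that meets the current boundary in an edge $e_t$. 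As the three edge-elements of $\sigma$ commute pairwise, $e_t^{\,n}=(gg')^{\,n}=g^{\,n}g'^{\,n}$ where $g,g'$ are the other two edges of $\sigma$; substituting this converts the word into the cycle word of the shorter boundary of $D\setminus\sigma$, one triangle relation being consumed, and the induction closes with a single directed triangle as base case (degenerate pinched configurations are split at the pinch vertex first). Finally, specialising $\varphi$ to the canonical character ($r=\ell=N=1$, $n_i=1$, $w_m=a_1^{\,m}$) and carrying the computation through the rewriting map $\tau$ gives the stated corollary.

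The step I expect to be the main obstacle is precisely the passage from the mechanically produced Reidemeister--Schreier relators --- the shifted squares $s_{n,i}s_{n+1,j}s_{n+1,i}^{-1}s_{n,j}^{-1}$ --- to the geometrically meaningful ones: first to the directed cycles, and then, under simple connectivity, to the finite list of triangle and commutation relations. These are the two van~Kampen-type filling arguments that turn connectivity, resp. simple connectivity, of $\triangle_\Gamma$ into a statement about which relators suffice; the genuinely delicate point is the exponent $n\ge 2$ in the second argument, since such a relator cannot be reduced to directed $3$-cycles one at a time --- the whole word must be pushed through the triangulated disc at once, which is exactly why the commutators among edges of a common simplex (and not merely the triangle relations) are indispensable. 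The remaining ingredients --- the rewriting itself, the Tietze moves, the path bookkeeping, and the final translation through $\tau$ --- are routine.
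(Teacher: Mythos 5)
Your outline is essentially sound, but be aware that the paper itself offers no proof of this statement: it is quoted verbatim from \cite{DL}, and what the paper actually does is recover the Dicks--Leary presentations as the specialization $n_i=1$, $\ell=r=N=1$, $w=a_1$ of its general theorems for arbitrary rational characters (Theorems \ref{thm:finite_generation}, \ref{thm:presentation} and \ref{thm:finite_presentation}), proved by a Reidemeister--Schreier rewriting over the non-Schreier transversal $w_m=a_{1}^{mr_1}\cdots a_{\ell}^{mr_\ell}$, manipulations with weighted paths, and, for the passage from cycle relators to triangle relators, an appeal to \cite[Proposition 2]{DL}. Your route --- the Schreier transversal $\{a_1^{\,n}\}$, the shifted-square relators $s_{n,i}s_{n+1,j}s_{n+1,i}^{-1}s_{n,j}^{-1}$, Tietze elimination along paths, then recognising $P\rtimes\ZZ\cong G_\Gamma$ to identify $P$ with $H_\Gamma$, and finally the disc-peeling induction using the commutations inside each $2$-simplex to handle the exponent $n\geqslant 2$ --- is in substance Dicks and Leary's original argument, so it proves the quoted theorem directly and more economically in the Bestvina--Brady case; the paper's heavier bookkeeping buys the generalization to kernels of arbitrary rational characters, where no vertex generator need map to $1\in\ZZ$ and hence no transversal of your form exists. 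The two points where your sketch is thinnest are exactly where the real work lies: (i) the well-definedness of the conjugation-by-$a_1$ automorphism of $P$ (its value on an edge generator is a path-dependent edge word, and one must check it respects all cycle relators) together with the presentation comparison giving $P\rtimes\ZZ\cong G_\Gamma$, and (ii) the degenerate and singular configurations in the van Kampen filling (triangles meeting the boundary in two edges, non-embedded discs), which your one-line remark about pinch vertices does not fully dispose of. Neither is a wrong turn --- these are precisely the computations carried out in \cite[Theorem 1 and Proposition 2]{DL}, and their analogues in this paper are the explicit rewriting in the proof of Theorem \ref{thm:finite_generation} and the conjugation argument in the proof of Theorem \ref{thm:presentation} --- but as written they are deferred rather than done.
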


\begin{figure}[h]
    \centering
    \begin{tikzpicture}
    \tikzset{
   % vrtx/.style args = {#1/#2}{label=#1:#2},
    edge/.style={draw=black,postaction={on each segment={mid arrow=black}}}
} 
%vertices
\node[fill=black!100, state, scale=0.10, vrtx/.style args = {#1/#2}{label=#1:#2}] (A) [vrtx=below/$a_i$]     at (0, 0) {};
\node[fill=black!100, state, scale=0.10, vrtx/.style args = {#1/#2}{label=#1:#2}] (C) [vrtx=above/$a_j$]    at (1, 1) {};
\node[fill=black!100, state, scale=0.10, vrtx/.style args = {#1/#2}{label=#1:#2}] (B) [vrtx=below/$a_k$]     at (2.5, 0) {};

\draw[edge] (A) -- (B) node[midway, below] {$g$};
\draw[edge] (C) -- (B) node[midway, above right] {$f$};
\draw[edge] (A) -- (C) node[midway, above left] {$e$};
    \end{tikzpicture}
    \caption{A directed triangle. %$\triangle(\overrightarrow{e_i}, \overrightarrow{e_j}, \overrightarrow{e_k})$.
    }
    \label{directed triangle}
\end{figure}

The Dicks-Leary presentation is not necessarily optimal, i.e. there are some redundant generators. A simpler presentation was given by Papadima-Suciu in~\cite{PS}, where the authors proved that it is sufficient to consider the edges of a spanning tree of $\Gamma$ as the generators of $H_\Gamma$.

\begin{exam}
Let $\Gamma$ be the graph in \Cref{fig:exam}. 
Choosing the spanning tree $T = \{e_1 , \ldots , e_5\}$ as indicated, the presentation of the Bestvina-Brady group is given as follows:
\[
H_\Gamma = \langle e_1,  \ldots, e_5 \mid [e_1 , e_2], [e_2 , e_3], [e_3 , e_4], e_5{e_2}^{-1}e_3 = {e_2}^{-1}e_3 e_5 \rangle.
\]
This particular $H_\Gamma$ is not isomorphic to any right-angled Artin group (see \cite[Proposition 9.4]{PS} for details). In \cite{CR}, Chang--Ruffoni characterized when the group $H_\Gamma$ associated to a $2$-dimensional $\triangle_\Gamma$ is a right-angled Artin group (see \cite[Theorem A]{CR}).
%The characterisation of which Bestvina-Brady groups are themselves RAAGs is still open.
\begin{figure}[h]
    \centering
\begin{tikzpicture}[shorten >=1pt,node distance=20cm,auto]
\tikzset{
    %vertex/.style={circle,draw,minimum size=1.5em},
    edge/.style={draw=black,postaction={on each segment={mid arrow=black}}}
}
\node[fill=black!100, state, scale=0.10, vrtx/.style args = {#1/#2}{label=#1:#2}] (1) [vrtx=left/$x_6$] {};

\node[fill=black!100, state, scale=0.10, vrtx/.style args = {#1/#2}{label=#1:#2}] (2) [vrtx=right/$x_4$] [ below right of = 1] {};

\node[fill=black!100, state, scale=0.10, vrtx/.style args = {#1/#2}{label=#1:#2}] (3) [vrtx=left/$x_5$] [ below left of = 1] {};

\node[fill=black!100, state, scale=0.10, vrtx/.style args = {#1/#2}{label=#1:#2}] (4) [vrtx=right/$x_3$] [ below right of = 2] {};

\node[fill=black!100, state, scale=0.10, vrtx/.style args = {#1/#2}{label=#1:#2}] (5) [vrtx=below/$x_2$] [ below left of = 2] {};

\node[fill=black!100, state, scale=0.10, vrtx/.style args = {#1/#2}{label=#1:#2}] (6) [vrtx=left/$x_1$] [ below left of = 3] {};
%\path[->] (1) edge [bend left=80] node {$a$} (2)
%(2) edge [bend left=80] node {$a$} (1);

%edges

\draw[edge] (6) -- (5) node[midway, below] {$e_1$};
\draw[edge] (5) -- (3) node[midway, left] {$e_2$};
\draw[edge] (5) -- (2) node[midway, right] {$e_3$};
\draw[edge] (5) -- (4) node[midway, below] {$e_4$};
\draw[edge] (2) -- (1) node[midway, right] {$e_5$};
\draw[edge] (6) -- (3);
\draw[edge] (3) -- (2);
\draw[edge] (3) -- (1);
\draw[edge] (4) -- (2);
%(5) edge node {} (2);
%(0,3) to [bend left] (3,4);
%\path[draw=blue,postaction={on each segment={mid arrow=red}}]
%(.2,0) -- (3,1) arc (0:180:1.4 and 1) -- cycle
  %(4,1) circle(.8)
 % (6,1) ellipse(.5 and 1)
  %(0,3) to [bend left] (3,4)
  %(4,3) rectangle (6,4);
\end{tikzpicture}
\caption{}
\label{fig:exam}
\end{figure}
\end{exam}

\section{Main results}
In \cite{MMV} Meier--Meinert--VanWyk described the homotopical and homological $\Sigma$-invariants of $G_\Gamma$ in terms of the topology of subcomplexes of $\triangle_\Gamma$. In particular, their work determines the finiteness properties of kernels of rational characters of $G_\Gamma$. They introduce the following

\begin{defn}\cite[Definition 1]{MMV}\label{defn:domination}
A subcomplex $L$ of a simplicial complex $K$ is $(-1)$-\emph{acyclic-dominating} if it is non-empty, or equivalently, $(-1)$-acyclic. 

For $n\geqslant 0$, $L$ is an $n$-\emph{acyclic-dominating} subcomplex of $K$, if for any vertex $v\in K\setminus L$, the ``restricted link" $lk_L(v) = lk(v) \cap L$ is $(n-1)$-acyclic and an $(n-1)$-acyclic-dominating subcomplex of the ``entire link" $lk(v)$ of $v$ in $K$. 
\end{defn}

Recall  that $\triangle_{\Gamma_L}$ is the induced subcomplex of $\triangle_\Gamma$ spanned by the vertices that are mapped nontrivially by the rational character.

With these notions, the authors proved the following

\begin{thm}\cite[Corollary A]{MMV}
Let $\Gamma$ be a simplicial graph, let $\triangle_\Gamma$ be the induced flag complex on $\Gamma$, and let $\varphi: G_\Gamma \to \ZZ$ be a rational character. Then
the kernel of $\varphi$ is of type $\mathbf{F_n}$ if and only if $\triangle_{\Gamma_L} < \triangle_\Gamma$ is $(n-1)$-connected and $(n-1)$-$\ZZ$-acyclic-dominating.   
\end{thm}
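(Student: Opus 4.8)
The plan is to prove this by Bestvina--Brady discrete Morse theory, following the scheme of \cite{BB} for the canonical character and of \cite{MMV} for an arbitrary one. Let $S_\Gamma$ be the Salvetti complex of $G_\Gamma$ (one vertex, an edge for each generator, a $k$-torus for each $k$-clique of $\Gamma$); its universal cover $X_\Gamma$ is a finite-dimensional CAT(0) cube complex on which $G_\Gamma$ acts freely and cocompactly, hence contractible. I would realise $\varphi$ by the $\varphi$-equivariant PL map $f\colon X_\Gamma\to\mathbb R$ that is affine on each cube and changes by $n_i=\varphi(a_i)$ along each edge labelled $a_i$. Then $\gh=\ker\varphi$ acts freely and cocompactly on each band $X_{[-t,t]}:=f^{-1}([-t,t])$, $t\geqslant 0$ (cocompactness because only finitely many $G_\Gamma$-translates of a given cube meet a bounded $f$-range), and $\bigcup_t X_{[-t,t]}=X_\Gamma$ is contractible. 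By Brown's criterion, $\gh$ is of type $F_n$ if and only if the filtration $\{X_{[-t,t]}\}_{t\geqslant 0}$ is essentially $(n-1)$-connected; equivalently, by the Bieri--Renz description of BNSR-invariants, if and only if $[\varphi]$ and $[-\varphi]$ both lie in $\Sigma^{n}(G_\Gamma)$. Since $\triangle_{\Gamma_L}$ is the same subcomplex for $\varphi$ and $-\varphi$ and the two combinatorial conditions in the statement are symmetric, this $\pm$-symmetry is automatically respected.

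The next step is the Morse-theoretic analysis of how $X_{[-t,t]}$ changes with $t$. When $n_i\neq 0$ for every $i$, $f$ is a genuine Morse function: it is non-constant on edges, its critical values are the isolated $f$-values of vertices, and passing such a value is, up to homotopy, the coning off of the descending link of the vertex on the way up and of the ascending link on the way down; consequently $X_{[-t,t]}\hookrightarrow X_{[-t',t']}$ (and $X_{[-t,t]}\hookrightarrow X_\Gamma$) is $n$-connected once all ascending and descending links are $(n-1)$-connected. As $G_\Gamma$ is transitive on vertices and $f$ is equivariant, all ascending links are isomorphic to a single complex and likewise all descending links; inspecting the link of a vertex of $X_\Gamma$ (the ``octahedralisation'' of $\triangle_\Gamma$, with antipodal vertices $a_i^\pm$ for each generator) shows that in this case both common links are copies of $\triangle_{\Gamma_L}=\triangle_\Gamma$. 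A standard ``no-cancellation'' argument, using that $\gh$ acts freely and the coned-off pieces are spread throughout $X_\Gamma$, then gives essential $(n-1)$-connectivity of the filtration exactly when $\triangle_\Gamma$ is $(n-1)$-connected, recovering \cite{BB}; here the domination clause of \Cref{defn:domination} is vacuous because $\triangle_{\Gamma_L}=\triangle_\Gamma$.

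The real difficulty, and the main obstacle, is the presence of \emph{dead} generators $a_i$ with $n_i=0$: then $f$ is constant on those edges and on every cube spanned by them, so $f$ is not a Morse function in the strict sense, the level sets contain positive-dimensional horizontal cells, and one cannot cross vertex-levels one at a time. I would handle this by a two-stage Morse argument at each level of $f$: first cone off the genuinely ascending (resp. descending) directions, and then contend with the horizontal subcomplex spanned by the dead generators adjacent to a vertex. The connectivity one must check for the ``ascending part'' of a vertex $v$ is no longer that of $\triangle_{\Gamma_L}$ outright, but that of $\triangle_{\Gamma_L}$ \emph{relative to} its horizontal piece, and estimating this relative connectivity reduces to the same kind of question inside the link $lk(v)$, relative to $lk_L(v)=lk(v)\cap\triangle_{\Gamma_L}$. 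Iterating this reduction is precisely what produces the recursive requirement that $\triangle_{\Gamma_L}$ be $(n-1)$-connected and that, for every dead vertex, the restricted link be $(n-2)$-connected and $(n-2)$-$\ZZ$-acyclic-dominating, and so on, down to the base case of \Cref{defn:domination}. The distinction between homotopical type $F_n$ and homological type $FP_n$ is what replaces ``$(n-1)$-acyclic'' by ``$(n-1)$-connected'' for $\triangle_{\Gamma_L}$ itself: one must control $\pi_1$ of the bands, not merely their homology. Assembling this induction and combining with the Brown/Bieri--Renz reduction of the first paragraph yields the stated equivalence.

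Finally, for $n=1$ and $n=2$ one can bypass Morse theory altogether, which is the route taken in the rest of this paper: for $n=1$, applying Reidemeister--Schreier to the transversal $\{w_m\}$ and showing by explicit manipulation that finitely many Schreier generators suffice precisely under the hypotheses of \Cref{thm:finite_generation}; for $n=2$, rewriting the defining commutator relators of $G_\Gamma$ via the Reidemeister process and showing that finitely many of the resulting relators normally generate the relation subgroup of $\gh$ exactly under the hypotheses of \Cref{thm:finite_presentation}. This algebraic approach is more delicate to push to $n\geqslant 3$, where the homological Morse-theoretic argument above appears to be the natural tool.
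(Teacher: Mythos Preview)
The paper does not prove this theorem. It is quoted verbatim as \cite[Corollary~A]{MMV} and used only as background; no argument for it appears anywhere in the paper. The paper's own contribution is restricted to the cases $n=1$ and $n=2$, handled purely algebraically via the Reidemeister--Schreier rewriting process in Theorems~\ref{thm:finite_generation}, \ref{thm:presentation}, and \ref{thm:finite_presentation}. So there is no ``paper's own proof'' of the statement to compare against.

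What you have written is a sketch of the original Morse-theoretic argument of \cite{MMV} (building on \cite{BB}), not of anything in this paper. As a sketch of \emph{that} argument it is broadly on target: the affine $\varphi$-equivariant height on the universal cover of the Salvetti complex, Brown's filtration criterion, the identification of ascending/descending links with $\triangle_{\Gamma_L}$ when there are no dead generators, and the observation that dead generators force a secondary analysis inside links are all the right ingredients. Your final paragraph already correctly identifies that the present paper takes a different route for $n\leqslant 2$; in effect you have outlined the proof of the cited result rather than of any theorem the paper actually establishes.

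If one were grading the sketch on its own terms, the part that remains genuinely unfinished is the ``two-stage Morse argument'' for dead generators: you assert that the connectivity question reduces recursively to the domination condition of Definition~\ref{defn:domination}, but you do not exhibit the homotopy equivalence (or spectral-sequence/Mayer--Vietoris step) that makes this reduction go through, nor do you say how the horizontal subcomplex interacts with the level-set filtration. That is the technical heart of \cite{MMV}, and your paragraph gestures at it rather than carrying it out.
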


In this section, we give an algebraic proof of the fact that $\triangle_{\Gamma_L}$ is connected and $0$-acyclic-dominating if and only if the kernel is finitely generated and we exhibit an explicit (possibly infinite) presentation, see Theorem \ref{thm:finite_generation}. Furthermore, if  $\triangle_{\Gamma_L}$ is simply connected and $1$-acyclic-dominating, then we show that the kernel is finitely presented and we exhibit an explicit presentation, see Theorem \ref{thm:finite_presentation}.

%\mallika{Should we pose the question here about the algebraic proof of finitely presented $\Rightarrow$ simply connected and 1-acyclic dominating here? Or, you prefer at the end of this}

Note that $0$-acyclic-domination requires that each vertex in $\triangle_\Gamma$ that is not in $\triangle_{\Gamma_L}$ be adjacent to a vertex in $\triangle_{\Gamma_L}$. Similarly, $1$-acyclic-domination requires that for each edge $(a_i,a_j)\in \triangle_\Gamma$, there is a vertex commuting with $a_i$ and $a_j$ that belongs to $\triangle_{\Gamma_L}$, i.e. either $a_i$ or $a_j$ are vertices in $\triangle_{\Gamma_L}$ or there exists $a_k$ such that $(a_i, a_k), (a_j,a_k)$  are edges in $\triangle_\Gamma$ and $a_k$ is a vertex in $\triangle_{\Gamma_L}$.

\textbf{Setting.} Any rational character from a RAAG $G_\Gamma$ can be viewed as an epimorphism $\varphi \colon G_\Gamma \twoheadrightarrow \ZZ$ sending each generator $a_i$ of $G_\Gamma$ to an integer ${n}_i\in \ZZ$. Since the map sending $a_i \to a_i^{-1}$ induces an automorphism of $G_\Gamma$, without loss of generality we may assume that $n_i \in \mathbb N \cup \{0\}$. Since $\varphi$ is surjective, there exists $w \in G_\Gamma$ such that $\varphi(w)=1$. Let $V(\Gamma)=\{a_1,\dots, a_s\}$ be the set of vertices of $\Gamma$. Without loss of generality (and up to relabelling the vertices), we can take  $w=a_{1}^{r_1} \cdots a_{\ell}^{r_\ell}$, where $n_i, r_i\ne 0$ for each $i=1, \dots, \ell$.

From the fact that $\varphi(w)=1$, we have that $n_{1}{r_1} + \cdots + n_{\ell}{r_\ell} =1$. Let $r=\max\{|r_1|, \dots, |r_\ell|\}$. %Notice that $r>0$ since by assumption $r_i \ne 0$ for $i=1, \dots, \ell$.

Denote the kernel $\ker \varphi$ by $\gh$. Let $\Gamma_L <\Gamma$ be the induced subgraph of $\Gamma$ defined by the set of vertices $a_i$ such that $n_i\ne 0$ and let $\triangle_{\Gamma_L}$ be the corresponding induced flag complex in $\triangle_\Gamma$. Since by assumption $n_i\ne 0$ for $i=1, \dots, \ell$, we have that $a_i \in V(\Gamma_L)$ for $i=1, \dots, \ell$.

\textbf{The Reidemeister-Schreier presentation.}
The Reidemeister-Schreier method is a technique for producing presentations (in general infinite) of a subgroup $H$ of a group $G$ from the presentation of $G$. There are many variants of this method, the most common being the one suggested by Schreier, which chooses a set of transverse elements with the extra property that it is closed under subwords, to obtain a simpler presentation. In our case, we will use the variant described by Reidemeister, see for instance \cite[Theorem 2.8]{MKS}, which allows any choice of a transverse set at the price of some extra relations. We recall this version in the following

\begin{thm}[Reidemeister-Schreier, see Theorem 2.8 in \cite{MKS}] Let $G = \langle s \in S \mid r \in  R\rangle$ be a group. Let $H \unlhd G$ be a normal subgroup, let $T$ be a set of right coset representatives of $H$ in $G$, and let $\overline{\cdot} \colon G \to T$, $w \to \overline{w}$ be a right coset representative function. Then, $H$ has a presentation
$$
\langle x_{s,t} \mid x_{s,t}=\tau(ts (\overline{ts})^{-1}), \tau(t r t^{-1})=1,  s\in S, t\in T \rangle, 
$$
under the mapping $x_{s,t}\to ts (\overline{ts})^{-1}$ and where $\tau$ is the Reidemeister rewriting process for words in $H$, see {\rm Definition \ref{defn:Reidemeister_rewriting}}.
\end{thm}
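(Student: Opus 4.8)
The plan is to reduce the statement to the case of a free group and then pass to a quotient. Write $F = F(S)$ for the free group on $S$ and $\pi \colon F \twoheadrightarrow G$ for the canonical projection, so that $\ker \pi = N_0 := \langle\langle R \rangle\rangle_F$. Put $\widetilde{H} := \pi^{-1}(H) \leqslant F$; since $N_0 \leqslant \widetilde{H}$ and $\pi(\widetilde{H}) = H$, we get $H \cong \widetilde{H} / N_0$. Fix words representing the coset representatives in $T$, with $1 \in T$, and set $b_{s,t} := t\,s\,(\overline{ts})^{-1}$ for $s \in S$, $t \in T$. The argument then has two stages: (1) produce a presentation of $\widetilde{H}$ on the symbols $x_{s,t}$, and (2) identify $N_0$, regarded as a subgroup of $\widetilde{H}$, so as to divide it out. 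Stage (2) is easy to set up immediately: $N_0$ is generated, as a subgroup of $\widetilde{H}$, by all conjugates $w r w^{-1}$ with $r \in R$, $w \in F$, and factoring $w = h t$ with $t = \overline{w} \in T$, $h \in \widetilde{H}$ gives $w r w^{-1} = h\,(t r t^{-1})\,h^{-1}$; hence $N_0 = \langle\langle\, t r t^{-1} : r \in R,\ t \in T \,\rangle\rangle_{\widetilde{H}}$, with each $t r t^{-1}$ lying in $\widetilde{H}$ since $\pi(r) = 1$.

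The crux is stage (1), the free case. I would show that the homomorphism $\psi \colon F(X) \to \widetilde{H}$ determined by $x_{s,t} \mapsto b_{s,t}$ is onto with kernel the normal closure of the relators $\{\, x_{s,t}\,\tau(b_{s,t})^{-1} \,\}$. The basic tool is the telescoping identity $\psi(\tau(w)) = w$ in $F$, valid for every word $w$ representing an element of $\widetilde{H}$, obtained by cancelling the consecutive factors $\overline{p}^{-1}\overline{p}$ attached to the successive prefixes $p$ of $w$; this already gives surjectivity of $\psi$. Alongside it one needs the routine bookkeeping properties of $\tau$: that it sends freely equal words to freely equal words in $F(X)$, that $\tau(w^{-1}) = \tau(w)^{-1}$ in $F(X)$, and that $\tau(u v) = \tau(u)\,\tau(v)$ in $F(X)$ whenever $u$ represents an element of $\widetilde{H}$ (because then $\overline{u p} = \overline{p}$ for all prefixes $p$). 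Granting these, a straightforward induction on the number of letters of $u \in F(X)$ shows $u \equiv \tau(\psi(u))$ modulo the relators $x_{s,t}\tau(b_{s,t})^{-1}$: the one-letter case is precisely a defining relator, and the inductive step uses the multiplicativity of $\tau$ together with $\tau(w^{-1})=\tau(w)^{-1}$. In particular, if $\psi(u) = 1$ in $F$ then the word $\psi(u)$ freely reduces to the empty word, so $\tau(\psi(u)) = 1$ in $F(X)$ and $u$ lies in the normal closure of the relators. Thus $\ker \psi$ is exactly that normal closure, and $\psi$ induces an isomorphism $F(X) / \langle\langle\, x_{s,t}\tau(b_{s,t})^{-1} \,\rangle\rangle \xrightarrow{\ \sim\ } \widetilde{H}$ sending $x_{s,t}$ to $b_{s,t}$.

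It remains to transport stage (2) through this isomorphism. By telescoping, $\psi(\tau(t r t^{-1})) = t r t^{-1}$ in $F$, so under the isomorphism above the subgroup $N_0 = \langle\langle\, t r t^{-1}\,\rangle\rangle_{\widetilde{H}}$ corresponds to $\langle\langle\, \tau(t r t^{-1})\,\rangle\rangle$. Dividing out yields
\[
H \;\cong\; \widetilde{H} / N_0 \;\cong\; \bigl\langle\, x_{s,t} \,\bigm|\, x_{s,t} = \tau\!\bigl(t s (\overline{ts})^{-1}\bigr),\ \tau\!\bigl(t r t^{-1}\bigr) = 1 \,\bigr\rangle,
\]
with $x_{s,t}$ mapping to the image of $t s (\overline{ts})^{-1}$ in $H$, which is the asserted presentation. (Normality of $H$ in $G$ is in fact not used anywhere above.) The one genuinely delicate point — and the main obstacle — is the free case of stage (1): verifying that the explicit, purely local rewriting $\tau$ is compatible with free reduction and with multiplication for an \emph{arbitrary} transversal $T$, not assumed closed under taking prefixes. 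This is exactly the phenomenon that forces the extra family of relations $x_{s,t} = \tau(t s (\overline{ts})^{-1})$ into the presentation, in contrast with Schreier's variant, where for a prefix-closed transversal these relations are either trivial or absent.
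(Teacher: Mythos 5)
Your proposal is correct, and it is essentially the classical Magnus--Karrass--Solitar argument: the paper itself does not prove this statement but cites it as Theorem 2.8 of \cite{MKS}, so there is no internal proof to compare against. Your two-stage reduction — first presenting $\widetilde{H}=\pi^{-1}(H)\leqslant F(S)$ on the symbols $x_{s,t}$ via the telescoping identity $\psi(\tau(w))=w$ and the bookkeeping properties of $\tau$ (compatibility with free reduction, $\tau(w^{-1})=\tau(w)^{-1}$, multiplicativity when the left factor lies in $\widetilde{H}$), then identifying $N_0=\langle\langle R\rangle\rangle_F$ as $\langle\langle\, t r t^{-1}\,\rangle\rangle_{\widetilde{H}}$ and adding the relators $\tau(trt^{-1})$ — is exactly how the cited theorem is established, and the induction showing $u\equiv\tau(\psi(u))$ modulo the relators $x_{s,t}\tau(b_{s,t})^{-1}$ closes the kernel computation correctly. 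One point worth making explicit: the telescoping identity (and hence the whole argument) needs the coset $H$ itself to be represented by the identity, i.e.\ $\overline{1}=1$ and $\overline{w}=1$ for $w$ representing an element of $H$; you state this as ``$1\in T$'', it is part of the definition of a coset representative function in \cite{MKS}, and it matches the paper's choice $w_0=1$, but it is not visible in the bare statement being proved. Your observation that normality of $H$ is never used is also correct; the paper assumes it only because its application is to kernels.
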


\begin{defn}[Reidemeister rewriting process]\label{defn:Reidemeister_rewriting}
Let $H< G=\langle a_1, \dots, a_s\rangle$ be a subgroup of $G$ and let $w\to \overline{w}$ be a right coset representative function for $G$ mod $H$. Let $u\in H$ be written as a word
$$
u=a_{i_1}^{\epsilon_1} a_{i_2}^{\epsilon_2}\cdots a_{i_r}^{\epsilon_r},
$$
where $\epsilon_i = \pm 1$.

The rewriting process $\tau$ expresses the word $u$ as a product of the generators $\overline{w} a_i \overline{wa_i}^{-1}$ as follows:
$$
(\overline{w_1} a_{i_1}^{\epsilon_1} \overline{w_1a_{i_1}^{\epsilon_1}}^{-1}) (\overline{w_2} a_{i_2} \overline{w_2a_{i_2}^{\epsilon_2}}^{-1}) \dots (\overline{w_r} a_{i_r}^{\epsilon_r} \overline{w_r a_{i_r}^{\epsilon_r}}^{-1}),
$$
where
$$
w_1=1, w_2=w_1a_{i_1}^{\epsilon_1}, \dots, w_r=w_{r-1}a_{i_{r-1}}^{\epsilon_{r-1}}.
$$
\end{defn}

\textbf{Choice of right coset representative.} We next choose the transverse elements for the subgroup $\gh$ as follows. We define $w_m$ to be the following element $a_{1}^{mr_1} \cdots a_{\ell}^{mr_\ell}$ of $G_\Gamma$, where $m\in \ZZ$, $m\ne 0$; for $m=0$, we define $w_0=1$. From the definition, since $r_1n_1+\dots+r_\ell n_\ell=1$, we have that $\varphi(w_m)= m(r_1n_1+\dots+r_\ell n_\ell)=m$. Let us take $\{ w_m \, \, | \, \, m \in \ZZ\}$ as transverse set for the normal subgroup $\gh \leqslant G_\Gamma$.

From the Reidemeister--Schreier Theorem, we have that the kernel $\gh$ admits the following presentation:

\begin{thm}[Reidemeister--Schreier]\label{thm: R-S presentation}
In the above notation, 
 $$
 \gh = \langle x_{m,i} \, \, \mid \, \, x_{m,i}=\tau(w_m a_i (w_{m+n_i})^{-1}), \tau(w_m [a_i,a_j] w_{m}^{-1})=1 \rangle,
 $$
 where $x_{m,i}=w_m a_i (w_{m+n_i})^{-1}$, $m\in \ZZ$, $a_i\in V(\Gamma)$, and $(a_i, a_j) \in E(\Gamma)$. 
\end{thm}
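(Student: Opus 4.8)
The statement is a direct specialisation of the Reidemeister--Schreier theorem quoted above (Theorem 2.8 of \cite{MKS}). The plan is to apply it with $G=G_\Gamma$ equipped with its standard presentation $\langle\, V(\Gamma)\mid [a_i,a_j]=1,\ (a_i,a_j)\in E(\Gamma)\,\rangle$, with $H=\gh=\ker\varphi$, which is normal in $G_\Gamma$ as the kernel of a homomorphism, with $S=V(\Gamma)$ and $R=\{\,[a_i,a_j]:(a_i,a_j)\in E(\Gamma)\,\}$, and with $T=\{\,w_m:m\in\ZZ\,\}$ together with the coset representative function $w\mapsto\overline{w}:=w_{\varphi(w)}$. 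The only thing that has to be checked before the theorem can be invoked is that $T$ is indeed a set of right coset representatives for $\gh$ in $G_\Gamma$ and that $\overline{\cdot}$ is the associated representative function; in particular that $\overline{1}=w_0=1$.

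First I would verify the transversal conditions. Since $\sum_{i=1}^{\ell}r_in_i=1$, one has $\varphi(w_m)=m$ for every $m\in\ZZ$ (and trivially $\varphi(w_0)=0$). Given $g\in G_\Gamma$, put $m=\varphi(g)$; then $\varphi\bigl(g\,w_m^{-1}\bigr)=0$, so $g\,w_m^{-1}\in\gh$ and hence $g\,\gh=w_m\,\gh$, which shows that every right coset of $\gh$ contains some $w_m$. Conversely, if $w_m\,\gh=w_{m'}\,\gh$ then $m=\varphi(w_m)=\varphi(w_{m'})=m'$, so distinct indices give distinct cosets. Therefore $T$ is a transversal, $\overline{\cdot}$ is a well-defined right coset representative function, and $\overline{1}=w_{\varphi(1)}=w_0=1$, as required by the theorem.

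Next I would substitute the data into the conclusion of the Reidemeister--Schreier theorem. For $s=a_i\in S$ and $t=w_m\in T$ we compute $\overline{ts}=\overline{w_ma_i}=w_{\varphi(w_ma_i)}=w_{m+n_i}$, since $\varphi(a_i)=n_i$; hence the generator $x_{s,t}=ts(\overline{ts})^{-1}$ is exactly $w_ma_i\,w_{m+n_i}^{-1}$, which we denote $x_{m,i}$. The first family of relations, $x_{s,t}=\tau\bigl(ts(\overline{ts})^{-1}\bigr)$, therefore reads $x_{m,i}=\tau\bigl(w_ma_i\,w_{m+n_i}^{-1}\bigr)$. For the relator $r=[a_i,a_j]\in R$ and $t=w_m$, the second family of relations, $\tau(t\,r\,t^{-1})=1$, reads $\tau\bigl(w_m[a_i,a_j]w_m^{-1}\bigr)=1$. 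Collecting these over all $m\in\ZZ$, $a_i\in V(\Gamma)$ and $(a_i,a_j)\in E(\Gamma)$ yields precisely the asserted presentation, with the isomorphism induced by $x_{m,i}\mapsto w_ma_i\,w_{m+n_i}^{-1}$.

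There is no serious obstacle here: the argument is a translation of notation together with the verification of the transversal conditions. The one point worth flagging is that $\{w_m\}$ is \emph{not} a Schreier transversal --- it is not closed under taking prefixes of words in the generators $a_i$ --- which is exactly why we use the Reidemeister form of the method rather than the Schreier form; the price is that the defining relations $x_{m,i}=\tau\bigl(w_ma_iw_{m+n_i}^{-1}\bigr)$ for the generators must be retained, and reducing them (together with the relations $\tau(w_m[a_i,a_j]w_m^{-1})=1$) to a finite set is what the subsequent sections address.
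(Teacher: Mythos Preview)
Your proposal is correct and follows exactly the approach implicit in the paper: the paper simply states that the presentation follows from the general Reidemeister--Schreier theorem applied with the chosen transversal $\{w_m\}$, without spelling out the verification that $T$ is a transversal or the computation of $\overline{w_m a_i}$. You have filled in precisely those routine details, and your observation that $\{w_m\}$ is not a Schreier transversal (hence the need for the Reidemeister variant and the extra family of relations $x_{m,i}=\tau(w_m a_i w_{m+n_i}^{-1})$) matches the paper's remark preceding the statement of the general theorem.
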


\begin{rem}\label{rem:Reidemeister_process}
In our specific case, the Reidemeister rewriting process for the word $u=a_{i_1}^{\epsilon_1} a_{i_2}^{\epsilon_2}\cdots a_{i_r}^{\epsilon_r}$ in $H$ takes the form:
$$
\tau(u)=x_{k_1,i_1}^{\epsilon_1} x_{k_2, i_2}^{\epsilon_2} \dots x_{k_r,i_r}^{\epsilon_r},
$$
where $k_l$ is either $\varphi(a_{i_1}^{\epsilon_1}\dots a_{i_{l-1}}^{\epsilon_{l-1}})$ if $\epsilon_l=1$, or $\varphi(a_{i_1}^{\epsilon_1}\dots a_{i_{l}}^{\epsilon_{l}})$ if $\epsilon_l=-1$.

For instance, suppose that $a_{i_1}a_{i_2}^{-1}a_{i_3}\in \gh$, i.e. $n_{i_1}-n_{i_2}+n_{i_3}=0$. Then 
$$
\tau(a_{i_1}a_{i_2}^{-1}a_{i_3})=(a_{i_1} w_{n_{i_1}}^{-1}) (w_{n_{i_1}} a_{i_2}^{-1} w_{n_{i_1}-n_{i_2}}^{-1}) (w_{n_{i_1}-n_{i_2}} a_{i_3} w_{n_{i_1}-n_{i_2}+n_{i_3}}^{-1})= x_{0,i_1} \,x_{{n_{i_1}-n_{i_2}},i_2}^{-1} \, x_{{n_{i_1}-n_{i_2}},i_3}.
$$
\end{rem}

Note that the Reidemeister--Schreier presentation is independent of the geometry of $\triangle_\Gamma$.

\begin{lem} \label{lem:trasverse_homomorphism}
The map $\tau$ is independent of the choice of the word $\omega$ in the free group, i.e. if $\omega$ and $\omega'$ define the same element in the free group then $\tau(\omega)=\tau(\omega')$.
\end{lem}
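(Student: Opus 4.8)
The plan is to observe that $\tau$ is really a two-step construction---first freely reduce a word, then apply the rewriting formula of Definition~\ref{defn:Reidemeister_rewriting}---and to show that the second step is insensitive to the first, i.e. that inserting or deleting a cancelling pair $a_i^{\epsilon}a_i^{-\epsilon}$ inside a word does not change the output of the rewriting formula. Since any two words representing the same element of the free group differ by a finite sequence of such insertions and deletions (and of the trivial moves $a_i^0$, which do not arise here), this suffices.

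First I would set up notation: given $u = a_{i_1}^{\epsilon_1}\cdots a_{i_r}^{\epsilon_r}$, recall that the prefixes are $w_1 = 1$ and $w_{l} = w_{l-1}a_{i_{l-1}}^{\epsilon_{l-1}}$, and $\tau(u) = \prod_{l=1}^{r}\bigl(\overline{w_l}\,a_{i_l}^{\epsilon_l}\,\overline{w_la_{i_l}^{\epsilon_l}}^{-1}\bigr)$. The key local computation is the following: suppose $u = u' \cdot u''$ and $\tilde u = u' \cdot a_i^{\epsilon} a_i^{-\epsilon} \cdot u''$. Writing $v = u'$ as a word and letting $g = v$ be the corresponding group element, the two extra syllables contribute the factor
\[
\bigl(\overline{g}\,a_i^{\epsilon}\,\overline{g a_i^{\epsilon}}^{-1}\bigr)\bigl(\overline{g a_i^{\epsilon}}\,a_i^{-\epsilon}\,\overline{g a_i^{\epsilon}a_i^{-\epsilon}}^{-1}\bigr),
\]
and since $ga_i^{\epsilon}a_i^{-\epsilon}$ and $g$ represent the same element, $\overline{g a_i^{\epsilon}a_i^{-\epsilon}} = \overline{g}$, so this telescopes to $\overline{g}\,a_i^{\epsilon}a_i^{-\epsilon}\,\overline{g}^{-1}$, the identity. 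Moreover the prefixes $w_l$ for the syllables coming from $u''$ are unchanged, because inserting the cancelling pair changes the relevant prefix group element by $a_i^{\epsilon}a_i^{-\epsilon}$, which is trivial; hence the factors contributed by $u''$ are identical in $\tau(u)$ and $\tau(\tilde u)$. Therefore $\tau(\tilde u) = \tau(u)$ as words in the generators $x_{m,i}$ (in fact even before any reduction of the output). Symmetrically, deletion of a cancelling pair leaves $\tau$ unchanged. Then I would invoke the standard fact that $\omega$ and $\omega'$ represent the same free group element if and only if one passes from one to the other by finitely many such elementary moves, and conclude by induction on the number of moves that $\tau(\omega) = \tau(\omega')$.

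The only subtlety---and the one point I would be careful to state explicitly rather than wave at---is bookkeeping the prefixes: one must check that inserting the pair at position between $u'$ and $u''$ does not perturb the index $k_l$ (equivalently the coset representative $\overline{w_l}$) of any later syllable, which is exactly the observation that $\varphi$ and the coset-representative function both ignore the trivial inserted factor. This is genuinely routine, but it is where a hasty argument could slip, so I would spell out that the prefix group elements of the $u''$-syllables in $\tilde u$ are obtained from those in $u$ by right-multiplying by the trivial element $a_i^{\epsilon}a_i^{-\epsilon}$. Everything else is formal. I do not expect a serious obstacle here; the lemma is a standard well-definedness check, included because later arguments will freely replace words by freely equal ones inside $\tau$.
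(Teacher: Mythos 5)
Your argument is correct and is essentially the paper's own proof: both verify that inserting or deleting a cancelling pair $a_i^{\epsilon}a_i^{-\epsilon}$ contributes only a factor $x_{k,i}^{\epsilon}x_{k,i}^{-\epsilon}$ (which cancels) while leaving the coset representatives of all later syllables unchanged, and then conclude by reduction to elementary moves. The only nitpick is your parenthetical claim that the outputs agree ``even before any reduction''---the inserted pair does appear literally in $\tau(\tilde u)$ and disappears only after free reduction in the $x_{m,i}$, exactly as in the paper's computation $u_1 x_{\varphi(\omega_1),i}x_{\varphi(\omega_1),i}^{-1}u_2=u_1u_2$.
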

\begin{proof}
Notice that $\tau(a_i a_i^{-1})= (a_i w_{n_i}^{-1})(w_{n_i} a_i^{-1} w_{n_i-n_i})=x_{0,i}x_{0,i}^{-1}$.

Suppose now that $\omega= \omega_1 a_i a_i^{-1} \omega_2$. Then 
$$
    \tau(\omega)= \tau(\omega_1 a_ia_i^{-1}\omega_2) = u_1 x_{\varphi(\omega_1),i} x_{\varphi(\omega_1),i}^{-1} u_2= u_1 u_2 = \tau(\omega_1\omega_2),
$$
where $u_1$ and $u_2$ are the words in the generators $x_{m,i}$ obtained in the Reidemeister rewriting process, see Definition \ref{defn:Reidemeister_rewriting} and Remark \ref{rem:Reidemeister_process}.
%\mallika{I don't understand this last line. Two middle terms are fine. But what is $u_1(x_{t,i})$?} 
\end{proof}

% \begin{rem}
% Note that the relators of $\gh$ given in Theorem \ref{thm: R-S presentation} are not written explicitly as words in the corresponding generators. However, a direct computation shows that: 
% $$
% w_m a_i a_j a_i^{-1} a_j^{-1} w_m^{-1} = (w_m a_i w_{m+n_i}^{-1}) (w_{m+n_i} a_j w_{m+n_i+n_j}^{-1}) ( w_{m+n_i+n_j} a_i^{-1} w_{m+n_j}^{-1}) (w_{m+n_j} a_j^{-1} w_m^{-1})
% $$
% and by Lemma \ref{lem:trasverse_homomorphism} we have that
% $\tau(w_m a_i a_j a_i^{-1} a_j^{-1} w_m^{-1})$ is 
% $$
% \tau(w_m a_i w_{m+n_i}^{-1})\tau(w_{m+n_i} a_j w_{m+n_i+n_j}^{-1}) \tau( w_{m+n_i+n_j} a_i^{-1} w_{m+n_j}^{-1})\tau(w_{m+n_j} a_j^{-1} w_m^{-1}).
% $$
% \end{rem}

\subsection{Notation} 
In our setting, assume that $\Gamma_L < \Gamma$ is connected.
\begin{itemize}

    \item (Paths) For any two vertices $a_i\neq a_j$ in $V(\Gamma_L)$, a path $q_{i,j}$ from $a_i$ to $a_j$ in $\Gamma_L$ is a sequence of vertices $a_i=a_{i_1}, a_{i_2}, \dots, a_{i_{d+1}}=a_j$ such that $a_{i_k}\in V(\Gamma_L)$ and $(a_{i_{k-1}}, a_{i_k})\in E(\Gamma_L)$, for $k=2, \dots, d+1$. The length of the path $d$ is the number of vertices minus $1$. If the path has length $0$, i.e. it is a vertex, say $a_i$, we denoted $q_{i,i}$ simply by $a_i$. If the path has length $1$, namely is given by two adjacent vertices $a_i, a_j$, we denote it by $e_{i,j}$. Sometimes we also think of paths of length more than $0$ as a sequence of edges $e_{i_1, i_2} e_{i_2, i_3} \dots e_{i_{d}, i_{d+1}}$.

    \item (Least common multiple of subsets of vertices) Denote by $N$ the least common multiple (lcm) of the $\{n_i\mid a_i\in V(\Gamma_L)\} \in \mathbb N$. For any $(a_i,a_j)\in E(\Gamma_L)$, let $N_{ij}$ be the lcm of $n_i$ and $n_j$. Let $p$ be a path in $\Gamma_L$, $p=a_{i_1}, \dots, a_{i_k}$. Then we define $N(p)$ to be the lcm of $n_{i_1}, \dots, n_{i_k}$. By definition, $N_{ij}, N(p)$ divide $N$ for any $(a_i, a_j)\in E(\Gamma_L)$ and any path $p$ in $\Gamma_L$.

    \item (Optimal paths) For the word $w$, we can define paths $q_w$ in $\Gamma_L$ as the concatenation of paths $q_{j_1,j_2}, q_{j_2,j_3}, \dots, q_{j_{t-1}, j_t}$, where $1=j_1<\dots< j_t=\ell$ (recall that we require that $n_i, r_i \ne 0$ for $i=1, \dots, \ell$ and so $a_1, \dots, a_\ell \in \Gamma_L$ and therefore $q_{j_r,j_s}\in \Gamma_L$).
    Note that, a priori, the paths $q_{i,k}$ above are not unique. We choose a path $q_w$ in $\Gamma_L$, such that the lcm $N(q_w)$ is minimal, denote it by $p_w$, and call such path \emph{optimal}. 

    Similarly, for any vertex $a_j \in V(\Gamma_L)$, we can consider paths $q_{j,w}$ that are the concatenation of a path $q_{j,1}$ and a path $q_w$ in $\Gamma_L$; among all such paths, we choose one so that $N(q_{j,w})=N_j\in \mathbb N$ is minimal. We denote such path $p_{j,w}$, call it \emph{optimal} and refer to the number $N_j$ as the \emph{local lcm} of the vertex $a_j$.

    \item (Weighted paths) For any integer $m$ which is divisible by $N_{ij}$, and $(a_i, a_j)\in E(\Gamma_L)$, we define the \textit{weighted edge} $e_{i,j}^m$ as $(a_i^{m/n_i} a_j^{-m/n_j})$. Similarly, for any integer $m$ divisible by $N(p)$, since any path $p=p_{i,j}$ in $\Gamma_L$ between two distinct vertices $a_i$ and $a_j$ (not necessarily adjacent) can be viewed as a sequence of consecutive edges in $\Gamma_L$, we define the \textit{weighted path} $p_{i,j}^m$ as the product of the weighted edges that define the path $p_{i,j}$, i.e. if $p_{i,j}=e_{i_1, i_2} e_{i_2, i_3} \dots e_{i_d, i_{d+1}}$, then $p_{i,j}^m=e_{i_1, i_2}^m e_{i_2, i_3}^m \dots e_{i_d, i_{d+1}}^m$. Notice that by definition, since $m$ is divisible by $N(p)$, it is also divisible by $N_{i_k, i_{k+1}}$ so the weighted edges are well-defined.

    Observe that in $G_\Gamma$, one has $e_{i,j}^m =(a_i^{m/n_i} a_j^{-m/n_j}) = (e_{j,i}^m)^{-1}$ and $p_{i,j}^m = (p_{j,i}^m)^{-1}$. On the other hand, while $e_{i,j}^m= e_{j,i}^{-m}$, we have that  $p_{i,j}^m \neq p_{j,i}^{-m}$ whenever $[a_i, a_j]\neq 1$. 
\end{itemize}

\begin{lem}\label{lem:edges}

    For each $a_j, a_k\in V(\Gamma_L)$ such that 
    $[a_j, a_k]=1$, we have that 
    $$e_{j,k}^{N_{jk}} \in \langle x_{m, i} \mid 0\leqslant m < N \rangle$$
     and, more precisely,
    $$
    e_{j,k}^{N_{jk}}= x_{0,j} x_{n_j,j}\cdots x_{N_{j,k}-n_j,j} x_{N_{j,k}-n_k, k}^{-1} \cdots x_{0,k}^{-1}.
    $$
    Therefore, $\langle e_{j,k}^{N_{j,k}} \mid (a_j, a_k)\in E(\Gamma_L)\rangle < \langle x_{m,i} \mid a_i \in V(\Gamma_L), 0 \leqslant m < N \rangle$.
\end{lem}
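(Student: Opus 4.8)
The plan is to prove the displayed identity by a direct application of the Reidemeister rewriting process of Remark~\ref{rem:Reidemeister_process} to the particular word representing $e_{j,k}^{N_{jk}}$, and then to read off which generators $x_{m,i}$ occur. First I would set $t_j = N_{jk}/n_j$ and $t_k = N_{jk}/n_k$; these are positive integers because $N_{jk}$ is by definition the least common multiple of $n_j$ and $n_k$, and by definition of a weighted edge one has $e_{j,k}^{N_{jk}} = a_j^{t_j} a_k^{-t_k}$ in $G_\Gamma$. Since $\varphi(a_j^{t_j} a_k^{-t_k}) = t_j n_j - t_k n_k = N_{jk} - N_{jk} = 0$, this element lies in $\gh$, so by Lemma~\ref{lem:trasverse_homomorphism} it coincides with $\tau(a_j^{t_j} a_k^{-t_k})$, which is what we want to compute.

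Next I would apply Remark~\ref{rem:Reidemeister_process} to the word $u = a_j^{t_j} a_k^{-t_k}$. For the $l$-th letter among the first $t_j$ letters (each equal to $a_j$, with exponent $+1$), the relevant index is $\varphi$ of the prefix $a_j^{l-1}$, namely $(l-1)n_j$, so the corresponding factor is $x_{(l-1)n_j,\, j}$ for $l=1,\dots,t_j$, giving $x_{0,j}\,x_{n_j,j}\cdots x_{N_{jk}-n_j,\, j}$. For the $m$-th letter among the last $t_k$ letters (each equal to $a_k$, with exponent $-1$), the relevant index is $\varphi$ of the prefix $a_j^{t_j}a_k^{-m}$ (the prefix \emph{including} the current letter, since the exponent is negative), namely $N_{jk}-m n_k$, so the corresponding factor is $x_{N_{jk}-m n_k,\, k}^{-1}$ for $m=1,\dots,t_k$, giving $x_{N_{jk}-n_k,\, k}^{-1}\cdots x_{n_k,k}^{-1}\,x_{0,k}^{-1}$. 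Concatenating these two blocks yields precisely the claimed formula for $e_{j,k}^{N_{jk}}$.

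Finally I would verify the index range. Every generator occurring has vertex index in $\{a_j,a_k\}\subseteq V(\Gamma_L)$, and its numerical index is one of $0,n_j,\dots,N_{jk}-n_j$ or $0,n_k,\dots,N_{jk}-n_k$; all of these are non-negative and strictly less than $N_{jk}$, and since $N_{jk}$ divides $N$ we have $N_{jk}\leqslant N$. Hence $e_{j,k}^{N_{jk}}\in\langle x_{m,i}\mid 0\leqslant m<N\rangle$, and more precisely it lies in $\langle x_{m,i}\mid a_i\in V(\Gamma_L),\ 0\leqslant m<N\rangle$. As this holds for every edge $(a_j,a_k)\in E(\Gamma_L)$, the asserted subgroup inclusion $\langle e_{j,k}^{N_{j,k}}\mid (a_j,a_k)\in E(\Gamma_L)\rangle<\langle x_{m,i}\mid a_i\in V(\Gamma_L),\ 0\leqslant m<N\rangle$ follows immediately.

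I do not expect a serious obstacle here: the statement reduces to careful bookkeeping. The one point requiring care is the asymmetry in Remark~\ref{rem:Reidemeister_process} between positive and negative exponents — whether the running index is $\varphi$ of the prefix \emph{before} or \emph{including} the current letter — together with the telescoping of the $w_m^{\pm1}$ factors, but the latter is already absorbed into that remark, so the computation is essentially mechanical. (Note that the hypothesis $[a_j,a_k]=1$ is not used in the displayed identity itself; it is recorded because the weighted edge $e_{j,k}$ is only defined when $(a_j,a_k)$ is an edge of $\Gamma_L$.)
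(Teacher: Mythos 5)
Your proposal is correct and follows essentially the same route as the paper: a direct telescoping computation of the word $a_j^{N_{jk}/n_j}a_k^{-N_{jk}/n_k}$ into the generators $x_{m,i}$ (the paper writes it as $(a_j^{N_{jk}/n_j}w_{N_{jk}}^{-1})(a_k^{N_{jk}/n_k}w_{N_{jk}}^{-1})^{-1}$ and telescopes each block, which is exactly the Reidemeister rewriting you invoke), with the same observation that all indices lie in $[0,N_{jk})\subseteq[0,N)$ since $N_{jk}$ divides $N$. Your handling of the negative-exponent indexing agrees with Remark \ref{rem:Reidemeister_process}, so the bookkeeping is fine.
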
    
\begin{proof}
Proof is by direct computation. Write    
$$
e_{j,k}^{N_{jk}}=(a_j^{N_{jk}/n_j} a_k^{-N_{jk}/n_k})= (a_j^{N_{jk}/n_j} w_{N_{jk}}^{-1})(a_k^{N_{jk}/n_k} w_{N_{jk}}^{-1})^{-1}.
$$
Now,
$$
(a_j^{N_{jk}/n_j} w_{N_{jk}}^{-1})= (a_j w_{n_j}^{-1}) (w_{n_j}a_j w_{2n_j}^{-1}) \cdots (w_{\frac{N_{jk}}{n_j}\cdot n_j-n_j}a_jw_{\frac{N_{jk}}{n_j}\cdot n_j}^{-1}).
$$ 
Note that $n_j < 2n_j < \cdots < \frac{N_{jk}}{n_j} \cdot n_j = N_{jk} \leqslant N$ and $0< n_j \leqslant N$ for $a_j\in V(\Gamma_L)$ and so $0\leqslant N_{jk}-n_j < N$. Hence, $(a_j^{N_{jk}/n_j} w_{N_{jk}}^{-1}) \in \langle x_{m,i} \mid a_i\in V(\Gamma_L), 0\leqslant m < N \rangle$ and the result follows.
\end{proof}

\begin{obs}\label{obs-1}
From Lemma \ref{lem:edges}, it follows that the weighted edges $e_{j,k}^m$, for $m$ a multiple of $N_{jk}$, also belong to the subgroup $\langle x_{m,i} \mid a_i \in V(\Gamma_L), 0\leqslant m < N  \rangle$.  

Indeed, if $m= tN_{jk}$, then $e_{j, k}^m = (a_j^{tN_{jk}/n_j} a_k^{-tN_{jk}/n_k})$ and since $[a_j,a_k]=1$, we have that $(a_j^{tN_{jk}/n_j} a_k^{-tN_{jk}/n_k})=(a_j^{N_{jk}/n_j} a_k^{-N_{jk}/n_k})^t$ and the later belongs to the subgroup since by Lemma~\ref{lem:edges}, $(a_j^{N_{jk}/n_j} a_k^{-N_{jk}/n_k}) \in \langle x_{m,i} \mid a_i \in V(\Gamma_L), 0 \leqslant m <N \rangle$. 
\end{obs}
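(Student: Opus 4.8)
The plan is to deduce this immediately from Lemma \ref{lem:edges}, using only that $a_j$ and $a_k$ commute in $G_\Gamma$. First I would unwind the definition of the weighted edge: since $(a_j,a_k)\in E(\Gamma_L)$ we have $n_j,n_k\neq 0$, and by definition $e_{j,k}^m=a_j^{m/n_j}a_k^{-m/n_k}$, which is meaningful precisely when $N_{jk}$ divides $m$. Write $m=tN_{jk}$ with $t\in\ZZ$; the case $t=0$ is trivial, since then $e_{j,k}^m=1$. For $t\neq 0$, using $[a_j,a_k]=1$ in $G_\Gamma$ — so that powers of $a_j$ commute with powers of $a_k$ — one gets
$$
e_{j,k}^{m}=a_j^{tN_{jk}/n_j}a_k^{-tN_{jk}/n_k}=\bigl(a_j^{N_{jk}/n_j}a_k^{-N_{jk}/n_k}\bigr)^{t}=\bigl(e_{j,k}^{N_{jk}}\bigr)^{t},
$$
the identity remaining valid for negative $t$ (the right-hand side then being a power of the inverse of $e_{j,k}^{N_{jk}}$).

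Next I would simply invoke Lemma \ref{lem:edges}, which places $e_{j,k}^{N_{jk}}$ in the subgroup $\langle x_{m,i}\mid a_i\in V(\Gamma_L),\ 0\leqslant m<N\rangle$ (it even exhibits an explicit word for it in the generators $x_{*,j}$ and $x_{*,k}$). Since a subgroup is closed under taking powers and inverses, $(e_{j,k}^{N_{jk}})^{t}$ lies in that same subgroup for every $t\in\ZZ$, and hence so does $e_{j,k}^m$, which is the assertion. I do not anticipate any genuine difficulty here: the only points worth a word of care are that $m/n_j$ and $m/n_k$ are honest integers — which is exactly the divisibility $N_{jk}\mid m$, recalling that $N_{jk}$ is the least common multiple of $n_j$ and $n_k$ — and that negative multiples of $N_{jk}$ are absorbed by closure under inverses rather than needing a separate case.
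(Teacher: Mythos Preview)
Your argument is correct and is essentially identical to the paper's: write $m=tN_{jk}$, use $[a_j,a_k]=1$ to factor $e_{j,k}^{m}=(e_{j,k}^{N_{jk}})^{t}$, and then invoke Lemma~\ref{lem:edges}. Your added remarks on $t=0$ and negative $t$ are harmless elaborations of the same one-line observation.
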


We obtain the following

\begin{lem}\label{lem: better bound}
   Let $p=p_{j,k}$ be any path in $\Gamma_L$ and let $m$ be an integer divisible by $N(p)$. Then the weighted path $p_{j,k}^{m} \in \langle x_{m,i} \mid a_i \in V(\Gamma_L), 0\leqslant m< N \rangle$.
\end{lem}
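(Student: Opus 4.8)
The plan is to reduce the statement directly to \Cref{obs-1} by decomposing the weighted path into its constituent weighted edges and using that a subgroup is closed under products. First I would write the path as $p=p_{j,k}=e_{i_1,i_2}e_{i_2,i_3}\cdots e_{i_d,i_{d+1}}$ with $a_{i_1}=a_j$, $a_{i_{d+1}}=a_k$, and each $(a_{i_t},a_{i_{t+1}})\in E(\Gamma_L)$; by the definition of a weighted path, $p_{j,k}^{m}=e_{i_1,i_2}^{m}e_{i_2,i_3}^{m}\cdots e_{i_d,i_{d+1}}^{m}$.

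The key divisibility observation is that $N(p)$ is by definition the least common multiple of $n_{i_1},\dots,n_{i_{d+1}}$, so for every $t$ the number $N_{i_t,i_{t+1}}=\mathrm{lcm}(n_{i_t},n_{i_{t+1}})$ divides $N(p)$, hence divides $m$. Thus each factor $e_{i_t,i_{t+1}}^{m}$ is a well-defined weighted edge of $\Gamma_L$ with $m$ a multiple of $N_{i_t,i_{t+1}}$, and since $[a_{i_t},a_{i_{t+1}}]=1$, \Cref{obs-1} applies and gives $e_{i_t,i_{t+1}}^{m}\in \langle x_{m,i}\mid a_i\in V(\Gamma_L),\ 0\leqslant m<N\rangle$.

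Finally, since $\langle x_{m,i}\mid a_i\in V(\Gamma_L),\ 0\leqslant m<N\rangle$ is a subgroup of $\gh$, it is closed under taking products, so the product $e_{i_1,i_2}^{m}e_{i_2,i_3}^{m}\cdots e_{i_d,i_{d+1}}^{m}=p_{j,k}^{m}$ lies in it, which is the claim. I do not expect a genuine obstacle here: the only point requiring care is checking that each intermediate weighted edge is well-defined, i.e. that the relevant $N_{i_t,i_{t+1}}$ divides $m$, which is exactly the divisibility chain $N_{i_t,i_{t+1}}\mid N(p)\mid m$ noted above; everything else is immediate from \Cref{obs-1} and closure under multiplication.
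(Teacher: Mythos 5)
Your proposal is correct and follows essentially the same route as the paper: the paper's proof is exactly the one-line reduction to the definition of a weighted path, Lemma \ref{lem:edges} and Observation \ref{obs-1}, and you have simply spelled out the details (the factorisation into weighted edges, the divisibility chain $N_{i_t,i_{t+1}}\mid N(p)\mid m$, and closure of the subgroup under products).
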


\begin{proof}
   Proof follows from the definition of a weighted path, Lemma~\ref{lem:edges} and Observation~\ref{obs-1}.
\end{proof}

Recall that $r=\max\{|r_1|, \dots, |r_\ell|\}>0$ where $r_1n_1 + \dots + r_\ell n_\ell=1$.

\begin{thm}[Finite generation]\label{thm:finite_generation} The subgraph $\Gamma_L$ is connected and $0$-acyclic-dominating if and only if $\ker \varphi=\gh$ is finitely generated. More precisely,
$$
\gh = \langle x_{m,i} \, \, \mid \, \, x_{t,i}=\tau(w_t a_i w_{t+n_i}^{-1}), \tau(w_M [a_i,a_j] w_{M}^{-1})=1 \rangle,
$$
where $x_{k,i}=w_k a_i w_{k+n_i}^{-1}$, $0 \leqslant |m| < \ell rN^2$, $0\leqslant |t| < \ell^2r^2N^3$, $M\in \ZZ$, $a_i\in V(\Gamma)$, and $(a_i, a_j) \in E(\Gamma)$.
\end{thm}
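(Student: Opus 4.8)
The plan is to read the theorem off the Reidemeister--Schreier presentation of \Cref{thm: R-S presentation} by eliminating the generators $x_{m,i}$ of large index, and to obtain the converse by mapping $\gh$ onto the kernel of a character of a nontrivial free product. Write $B=\ell r N^{2}$ and $H_{0}=\langle x_{m,i}\mid a_{i}\in V(\Gamma),\ 0\le|m|<B\rangle$. By \Cref{thm: R-S presentation} the group $\gh$ is generated by all $x_{m,i}=w_{m}a_{i}w_{m+n_{i}}^{-1}$, so to prove finite generation and to identify $H_{0}$ as a generating set it suffices to show $x_{m,i}\in H_{0}$ for every $m\in\ZZ$ and every vertex $a_{i}$; the stated presentation will then follow by Tietze transformations.

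The first and main step handles the vertices of $\Gamma_{L}$: I would show that $x_{m,i}\in H_{0}$ for all $m$ whenever $a_{i}\in V(\Gamma_{L})$, which amounts to showing that $\ker(\varphi|_{G_{\Gamma_{L}}})=\gh\cap G_{\Gamma_{L}}$ is generated by the $x_{m,i}$ with $a_{i}\in V(\Gamma_{L})$ and $|m|$ bounded. This is where connectivity of $\Gamma_{L}$ enters. The combination of \Cref{lem:edges}, \Cref{obs-1} and \Cref{lem: better bound} already gives that every weighted path in $\Gamma_{L}$, in particular every optimal weighted path $p_{i,w}^{\,tN_{i}}$, lies in $\langle x_{m,i}\mid a_{i}\in V(\Gamma_{L}),\ 0\le m<N\rangle$. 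Using such weighted paths, an induction on $|m|$ rewrites a given $x_{m,i}$ (with $a_{i}\in V(\Gamma_{L})$) as a product of an optimal weighted path, a power of the weighted edge based at $a_{i}$, the inverse path, and a correction term of strictly smaller height measuring the discrepancy between the transverse word $w_{m}=a_{1}^{mr_{1}}\cdots a_{\ell}^{mr_{\ell}}$ and a genuine weighted path. The height of that correction is controlled by the $\varphi$-excursion of the Reidemeister rewriting $\tau(w_{m}a_{i}w_{m+n_{i}}^{-1})$, which is at most $(\text{length of }w_{m})\times(\text{largest }n_{j})$; tracking this over one period of length $\le\ell r N$ and iterating the reduction is what forces the window $|m|<\ell r N^{2}$. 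Making this bookkeeping precise — turning ``$\Gamma_{L}$ connected'' into the explicit bound $\ell r N^{2}$ by controlling the excursions of $\tau$ and of the optimal paths — is the main obstacle of the whole proof.

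The second step handles a vertex $a_{i}\notin V(\Gamma_{L})$, so that $n_{i}=0$ and $x_{m,i}=w_{m}a_{i}w_{m}^{-1}$. By $0$-acyclic-domination (\Cref{defn:domination}) there is $a_{k}\in V(\Gamma_{L})$ adjacent to $a_{i}$. Expanding the relation $\tau(w_{m}[a_{i},a_{k}]w_{m}^{-1})=1$ of \Cref{thm: R-S presentation} with the rewriting rule of \Cref{rem:Reidemeister_process}, and cancelling the prefix and suffix coming from $w_{m}$, yields for every $m$ the identity
\[
x_{m+n_{k},\,i}=x_{m,k}^{-1}\,x_{m,i}\,x_{m,k}.
\]
Iterating this downwards in steps of $n_{k}$ expresses $x_{m,i}$ as a single $x_{s,i}$ with $|s|<n_{k}\le N<B$ (already a generator) conjugated by the elements $x_{m',k}$ with $a_{k}\in V(\Gamma_{L})$ and $|m'|\le|m|$, all of which lie in $H_{0}$ by the first step; hence $x_{m,i}\in H_{0}$. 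Combining the two steps gives $\gh=H_{0}$, so $\gh$ is finitely generated. Finally, the displayed presentation is obtained from the Reidemeister--Schreier presentation by Tietze transformations: the generators of index $\ge B$ are eliminated via Steps 1 and 2, and for $|t|<B$ the relation $x_{t,i}=\tau(w_{t}a_{i}w_{t+n_{i}}^{-1})$ then involves only generators of index at most $\ell r N$ times $|t|$, hence less than $\ell r N\cdot\ell r N^{2}=\ell^{2}r^{2}N^{3}$ — precisely the range of the relations of type $R_{3}$ — while the commutator relations $\tau(w_{M}[a_{i},a_{j}]w_{M}^{-1})=1$ are kept for all $M$.

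For the converse, suppose $\Gamma_{L}$ is not connected or not $0$-acyclic-dominating. In the first case, group the components of $\Gamma_{L}$ into two nonempty unions $D_{1},D_{2}$ and kill every vertex of $\Gamma$ outside $\Gamma_{L}$; since no edge of $\Gamma$ joins $D_{1}$ to $D_{2}$, the quotient is $Q=G_{D_{1}}*G_{D_{2}}$, a free product of two nontrivial RAAGs. In the second case, choose $a_{i}\in V(\Gamma)\setminus V(\Gamma_{L})$ with no neighbour in $V(\Gamma_{L})$ and kill the link of $a_{i}$; surjectivity of $\varphi$ forces $\Gamma':=\Gamma\setminus\mathrm{st}(a_{i})$ to be nonempty with $\varphi(G_{\Gamma'})\ne 0$, and the quotient is $Q=G_{\Gamma'}*\langle a_{i}\rangle$. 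In both cases the killed generators lie in $\gh$, so $\varphi$ descends to a surjection $\bar\varphi\colon Q\twoheadrightarrow\ZZ$ and $q(\gh)=\ker\bar\varphi$. Now $\ker\bar\varphi$ acts on the Bass--Serre tree of the free product $Q$ with trivial edge stabilisers, so it is the fundamental group of a graph of groups with trivial edge groups whose underlying graph has infinitely many edges and, in the second case, also infinitely many vertices carrying the nontrivial vertex group $\langle a_{i}\rangle$. In either case $\ker\bar\varphi$ surjects onto an infinitely generated free group (or onto an infinite free product of nontrivial groups) and is therefore not finitely generated, so $\gh$ is not finitely generated. This completes the ``only if'' direction, and the ``if'' direction was handled above.
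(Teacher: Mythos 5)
Your outline founders exactly where the theorem lives: Step 1 is not a proof but a promise. You assert that, for $a_i\in V(\Gamma_L)$, an induction on $|m|$ rewrites $x_{m,i}$ as ``optimal weighted path $\cdot$ power of a weighted edge $\cdot$ inverse path $\cdot$ correction of smaller height'', and you yourself flag that turning connectivity of $\Gamma_L$ into the bound $\ell rN^2$ ``is the main obstacle of the whole proof''. That obstacle is the content of the theorem. The paper resolves it not by an induction with correction terms but by one explicit rewriting: divide each exponent $mr_j$ by $N$ with remainder, split $w_ma_iw_{m+n_i}^{-1}$ into blocks $a_j^{s_j}$ and $a_j^{k_jN}$, convert the large blocks into weighted paths $p_{j,j+1}^{N(k_1n_1+\cdots+k_jn_j)}$ inside $\Gamma_L$ (connectivity), and use a dominating vertex $a_j\in V(\Gamma_L)$ adjacent to $a_i$ to carry the last weighted path across $a_i$ ($0$-acyclic-domination); every resulting factor is of type $w_xa_i^yw_{x+n_iy}^{-1}$ or $w_xp^tw_x^{-1}$ with $|x|,|x+n_iy|<\ell rN^2$, and Lemma \ref{lem:edges}/Observation \ref{obs-1} finish. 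Without some such explicit control your induction is unverified: you never exhibit the correction term, never show its height strictly drops, and never show the process stays inside the window $|m|<\ell rN^2$, so neither finite generation nor the specific generating set is established.

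The gap propagates to the presentation claim, which is part of the statement. A Tietze elimination of $x_{M,i}$ cannot be performed directly from the Reidemeister--Schreier relation $x_{M,i}=\tau(w_Ma_iw_{M+n_i}^{-1})$, because (see Remark \ref{rem:Reidemeister_process} and Equation \eqref{eq:maineq 2}) the right-hand side literally contains the letter $x_{M,i}$; one needs the independent expression of $w_Ma_iw_{M+n_i}^{-1}$ in small-index generators produced by the missing Step 1, and the paper's bound $\ell rN\cdot|m|\leqslant\ell^2r^2N^3$ on the indices occurring in $\tau(w_ma_iw_{m+n_i}^{-1})$ is what justifies keeping exactly the relations $R_3$ with $0\leqslant|t|<\ell^2r^2N^3$; your bookkeeping instead keeps $R_3$ only for $|t|<\ell rN^2$, which is not the stated presentation. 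On the positive side, your Step 2 (the identity $x_{m+n_k,i}=x_{m,k}^{-1}x_{m,i}x_{m,k}$ for $n_i=0$ and $a_k\in V(\Gamma_L)$ adjacent to $a_i$) is correct and is a clean alternative to the paper's treatment of non-$\Gamma_L$ vertices, and your converse (pass to a free product quotient and use Bass--Serre/Grushko to see the kernel is not finitely generated) is sound and merely replaces the paper's appeal to Baumslag's theorem on finitely generated normal subgroups of free products; but both are conditional on, or peripheral to, the unproved main step.
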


\begin{rem}\label{rem:relations_in_generators}
Notice that in the family of relations $R_3$, the elements $x_{t,i}$ for $\ell rN^2  \leqslant |t| < \ell^2r^2N^3$ are not in the generating set and so formally, $x_{t,i}$ should be replaced by a word in the generators that represents it. We abuse the notation and keep $x_{t,i}$ in the relations for simplicity.  
\end{rem}

\begin{proof}
Assume that $\Gamma_L$ is connected and $0$-acyclic-dominating. From Theorem \ref{thm: R-S presentation}, we have that  $\gh = \langle x_{m,i}, m\in \ZZ, a_i\in V(\Gamma) \rangle$. Our goal is to prove that $\gh$ is finitely generated and admits a presentation as in the statement. 

We first show that $\gh$ is generated by $\langle x_{m,i} \mid 0\leqslant |m| < lrN^2, a_i \in V(\Gamma)\rangle$ and when expressing $\tau(w_M a_i w_{M+n_i}^{-1})$ for $|M| \geqslant l^2r^2N^3$ in terms of the finite set of generators, we will see that the generator $x_{M,i}$ does not appear in this expression and so it can be removed together with the relation $x_{M,i}=\tau(w_M a_i (w_{M+n_i})^{-1})$, using a Tietze move.

Recall that
\begin{equation}\label{equ: start equ}
   x_{m,i}=w_ma_iw_{m+n_i}^{-1} = a_{1}^{mr_1} \cdots a_{\ell}^{mr_\ell} a_i (a_{1}^{(m+n_i)r_1} \cdots a_{\ell}^{(m+n_i)r_\ell})^{-1}. 
\end{equation}

Divide $mr_j$ by $N$ with remainder, $mr_j=k_jN+s_j$; $j= 1, \ldots, \ell$, here $s_j < N$. Rewriting Equation~\eqref{equ: start equ} we have
     %Without the loss of generality and for the shake of simplicity, we can consider $w=a_{i_1}^{r_1} a_{i_2}^{r_2}$ and then we can imitate our argument for general $\ell$.  Thus we 
$$
    \begin{array}{l}
    \vspace{0.1cm}
          w_ma_i(w_{m+n_i})^{-1} \\
         \vspace{0.1cm}
         
         = a_{1}^{mr_1} \, \, a_{2}^{mr_2} \cdots a_{\ell}^{mr_\ell} \, \, a_i  \, \,(w_{m+n_i})^{-1}\\
         \vspace{0.1cm}
         
         = a_{1}^{s_1} \, \, a_{1}^{k_1N} \cdots a_{\ell}^{k_\ell N} \, \, a_{\ell}^{s_\ell} \, \, a_i \, \, (w_{m+n_i})^{-1}\\
         \vspace{0.1cm}
         
         = a_{1}^{s_1} \{a_{1}^{k_1N}a_{2}^{-(Nk_1n_1)/n_2}\} \, \, a_{2}^{s_2} \{a_{2}^{N(k_1n_1+ k_2n_2)/n_2} a_3^{-N(k_1n_1+ k_2n_2)/n_3}\} \cdots \\
         \quad \quad \quad \quad \cdots \{a_{\ell}^{Nd/n_\ell} a_j^{-Nd/n_j}\} \, \, a_i \, \, a_j^{Nd/n_j} (w_{m+n_i})^{-1}\\
         \vspace{0.1cm}

         = a_{1}^{s_1} \, \, p_{1, 2}^{Nk_1n_1} \, \, a_{2}^{s_2} \, \, p_{2, 3}^{N(k_1n_1+k_2n_2)} \cdots a_{\ell}^{s_\ell} \, \, p_{\ell, j}^ {Nd} \, \, a_i \, \, a_j^{Nd/n_j} (w_{m+n_i})^{-1},
\end{array}
$$
where $d= k_1n_1 + \cdots + k_\ell n_\ell$, $p_{k,j}$ are paths in $\Gamma_L$, and $a_j \in V(\Gamma_L)$ is adjacent to $a_i$ in $\Gamma$. Notice that the paths $p_{k,k+1}$, $k=1, \dots, \ell-1$, exist since $\Gamma_L$ is connected and $p_{\ell,j}$ exists (with $a_j\in V(\Gamma_L)$ and either $a_i=a_j$ or $(a_i,a_j)\in E(\Gamma)$) since $\triangle_{\Gamma_L}$ is $0$-acyclic-dominating. Let, $d' = n_1s_1 + \cdots + n_\ell s_\ell$. Continuing the same procedure, we arrive at the following:
\begin{equation} \label{eq:maineq}
\begin{array}{l}  
\vspace{0.1cm}
      w_ma_i(w_{m+n_i})^{-1}\\
      \vspace{0.1cm}
      
        = a_{1}^{s_1} \, \, p_{1, 2}^{Nk_1n_1} \, \, a_{2}^{s_2} \, \, p_{2, 3}^{N(k_1n_1+k_2n_2)} \cdots a_{\ell}^{s_\ell} \, \, p_{\ell, j}^{Nd} \, \, a_i \, \, p_{j, \ell}^{Nd}  \, \, a_{\ell}^{-s_\ell-r_\ell n_i} \cdots p_{2, 1}^{Nk_1n_1} \, \, a_{1}^{-s_1-r_1n_i}\\
       \vspace{0.1cm}

       = a_{1}^{s_1} \, \, p_{1, 2}^{Nk_1n_1} \, \, a_{2}^{s_2} \, \, p_{2, 3}^{N(k_1n_1+k_2n_2)} \cdots a_{\ell}^{s_\ell} \, \, p_{\ell, j}^{Nd} \, \, a_i \, \, (p_{\ell, j}^{Nd})^{-1} \, \, a_{\ell}^{-s_\ell-r_\ell n_i} \cdots (p_{1, 2}^{Nk_1n_1})^{-1} \, \, a_{1}^{-s_1-r_1n_i}\\
       \vspace{0.1cm}
        
        = (a_{1}^{s_1} \, \, w_{n_1s_1}^{-1}) \, \,(w_{n_1s_1} \, \, p_{1, 2}^{Nk_1n_1} \, \, w_{n_1s_1}^{-1})
        (w_{n_1s_1}a_{2}^{s_2} w_{n_1s_1+n_2s_2}^{-1})
        \cdots (w_{d'-n_\ell s_\ell}a_{\ell}^{s_\ell} w_{d'}^{-1}) \\        
        (w_{d'} \, \, a_i \, \, w_{n_i+d'}^{-1}) 
        (w_{n_i+d'}(p_{\ell, j}^{Nd})^{-1} w_{n_i+d'}^{-1})(w_{n_i+d'} a_{\ell}^{-s_\ell-r_\ell n_i} w_{n_i+d'-n_\ell s_\ell -  n_\ell n_i  r_\ell}^{-1})\\
        \cdots (w_{n_in_1r_1+n_1s_1} \, \, a_{1}^{-s_1-r_1n_i} \, \, w_{(n_in_1r_1+n_1s_1) -n_1s_1-n_in_1r_1}^{-1}).
        \end{array}
\end{equation}

Every term in the last expression of Equation \eqref{eq:maineq} has one of the two following forms:
    \begin{enumerate}
        \item\label{it:1} $w_xa_i^yw_{x+n_iy}^{-1}$,
        \item\label{it:2} $w_x p_{j,k}^{t} w_x^{-1}$, $w_{x+n_iy} p_{j,k}^{t} w_{x+n_iy}^{-1}$, $p_{j,k}^{t}$ is a weighted path in $\Gamma_L$ joining  two vertices.
    \end{enumerate}
Furthermore, using the definition of $d'$, we get that 
$$
|x|,|x+n_iy| \leqslant \max_{1\leqslant k \leqslant \ell} \{ |n_1s_1|+ \dots +|n_k s_k| + n_i |( 1 - \sum_{k\leqslant j \leqslant \ell} n_jr_j)|\}. 
$$
Since $1 = \sum_{1\leqslant j \leqslant \ell} n_jr_j$, we have that $1 - \sum_{k\leqslant j \leqslant \ell} n_jr_j= \sum_{1\leqslant j <k} n_jr_j$ and so 
$$
|x|,|x+n_iy| \leqslant \max_{1\leqslant k \leqslant \ell} \{ |n_1s_1|+ \dots +|n_k s_k| + n_i |\sum_{1\leqslant j <k} n_jr_j|\}. 
$$
Since $|n_is_i|<N^2$, $|n_in_jr_j|< rN^2$ for each $1\leqslant i,j\leqslant \ell$ and $N^2 < rN^2$, we conclude that
$$
|x|, |x+n_iy| < \ell r N^2.
$$

Write each term $w_xa_i^y(w_{x+n_iy})^{-1}$ as follows
\begin{equation}\label{eq:first_type}
w_x a_i^yw_{x+n_iy}^{-1}= (w_x a_i w_{x+n_i}^{-1})(w_{x+n_i} a_i(w_{x+2n_i})^{-1})\cdots (w_{x+(y-1)n_i} a_i(w_{x+n_iy})^{-1}).
\end{equation}

\paragraph{Type (1) terms} Terms of the form $w_xa_i^yw_{x+n_iy}^{-1}$ in Equation \eqref{eq:maineq} can be written as a product of generators from the theorem as in Equation \eqref{eq:first_type}, since in that case, $|x|$ and $|x+n_iy|$ are bounded by $\ell r N^2$. 

\paragraph{Type (2) terms} In this case, we write $p_{j,k}^{t}$ as a product of weighted edges, $e_{j_1, j_2}^{t} = (a_{j_1}^{t/n_{j_1}} a_{j_2}^{-t/n_{j_2}})$ and we write $w_x p_{j,k}^{t} w_x^{-1}$ as a product of conjugates of weighted edges $w_x e_{j_1, j_2}^{t} w_x^{-1}$.

%If $p_{j,k}^{t}$ is a weighted path, then $w_x p_{j,k}^{t} w_x^{-1}$ (and similarly $w_{x+n_iy} p_{j,k} w_{x+n_iy}$) can be written as a product of the generators from the statement of the theorem.

%Observe that, by definition of a weighted path, all the weighted paths that appear in the last expression of Equation \eqref{eq:maineq} belong to the subgroup $ \langle (a_{j_1}^{N_{i}/n_{j_1}} a_{j_2}^{-N_{i}/n_{j_2}}) \, \, | \, \, [a_{j_1}, a_{j_2}] =1\rangle$.% and $a_j, a_k$ are the vertices involved in the optimal path joining $a_i, a_1$ and $a_2$.

Recall that $t$ is a multiple of $N$, say $t=qN$. Since $[a_{j_1},a_{j_2}]=1$, we can write $w_x e_{j_1, j_2}^{t} w_x^{-1}$ as a product of $q$ factors $w_x e_{j_1, j_2}^{N} w_x^{-1}$. 

We are left to show that $w_x e_{j_1, j_2}^{N} w_x^{-1}$ can be written as a product of the generators from the statement of the theorem. 

Write
$$
w_x e_{j_1, j_2}^{N} w_x^{-1}=(w_x a_{j_1}^{N/n_{j_1}} w_{x+N}^{-1}) (w_{x+N} a_{j_2}^{-N/n_{j_2}} w_x^{-1}).
$$

Since $|x|, |x+n_iy| < \ell r N^2$, type (2) terms $w_x p_{j,k}^{t} w_x^{-1}$ and $w_{x+n_iy} p_{j,k} w_{x+n_iy}$ in Equation \eqref{eq:maineq} can be written as a product of the generators $x_{m,i}$ for $0\leqslant |m| < \ell rN^2$ and $a_i\in V(\Gamma)$ (in the free group $F(a_i\mid a_i\in V(\Gamma))$).

From Lemma \ref{lem:trasverse_homomorphism}, we have that that $\tau(w_M a_i w_{M+n_i}^{-1})$ is the same as the product of some $\tau(w_m a_i w_{m+n_i}^{-1})$ where $|m| < \ell rN^2$. 

Furthermore, 
\begin{equation} \label{eq:maineq 2}
\begin{array}{l}  
\vspace{0.1cm}
      \tau(w_ma_i(w_{m+n_i})^{-1})\\
      \vspace{0.1cm}
      
        = \tau(a_1^{r_1m} \cdots a_\ell^{r_\ell m} a_i a_\ell^{-r_l(m+n_i)} \cdots a_1^{-r_1(m+n_i)})\\
        \vspace{0.1cm}

        = (a_1 w_{n_1}^{-1}) \cdots (w_{r_1n_1m - n_1} a_1 w_{r_1n_1m}^{-1})\\
        (w_{r_1n_1m} a_2 w_{r_1n_1m +n_2}^{-1}) \cdots (w_{r_1n_1m+r_2n_2m -n_2}a_2w_{r_1n_1m+r_2n_2m}^{-1}) \cdots \\
        \cdots (w_{r_1n_1m+ \dots +r_{\ell-1}n_{\ell-1}m}a_\ell w_{r_1n_1m+ \dots +r_{\ell-1}n_{\ell-1}m+n_\ell}^{-1}) \cdots (w_{r_1n_1m+\dots+r_\ell n_\ell m-n_\ell}a_\ell w_{r_1n_1m+\dots+r_\ell n_\ell m}^{-1})
        \\
        (w_m a_i w_{m+n_i}^{-1}) (w_{m+n_i}a_{\ell}^{-1}w_{m+n_i-n_\ell}^{-1}) \cdots (w_{n_1}a_1^{-1} w_0) \\
        \vspace{0.1cm}

        = x_{0,1} \cdots x_{r_1n_1m-n_1,1}\cdots 
        x_{m-n_\ell, \ell} x_{m,i} \cdots x_{0,1}^{-1}.
        \end{array}
\end{equation}

Notice that for all the generators $x_{t,i}$ in the above equation, we have that $|t|\leqslant \ell rNm$ and since $|m|\leqslant \ell rN^2$, we conclude that $|t|\leqslant \ell^2r^2N^3$.

It follows that the generators $x_{M,i}$ do not appear in the expression of $\tau(w_M a_i w_{M+n_i}^{-1})$ for $|M| \geqslant \ell^2r^2N^3$, therefore, we can apply a Tietze transformation and remove the generator and the relation and the result follows.

Assume now that $\triangle_{\Gamma_L}$ is either not connected or not $0$-acyclic dominating. Observe that, for any induced subgraph $\Gamma_L \leqslant \Upsilon \leqslant \Gamma$ containing $\Gamma_L$, we have that $G_\Gamma$ retracts to $G_\Upsilon$ and the epimorphism $\varphi:G_\Gamma \to \ZZ$ factors through the epimorphism $\varphi|_{G_\Upsilon}: G_\Upsilon \to \ZZ$ given by the restriction. If $\Gamma_L$ is not connected, then $G_{\Gamma_L}$ is a nontrivial free product, and if it is not $0$-acyclic-dominating and $a_l$ is a vertex in $\Gamma$ not connected to $\Gamma_L$, taking $\Upsilon$ to be the induced subgraph defined by $\Gamma_L$ and $a_l$, we have that $G_\Upsilon$ is also a nontrivial free product. Since by a result by Baumslag, see \cite{Baumslag}, nontrivial finitely generated normal subgroups of free products are of finite index, we have that the kernel $K_\Upsilon$ of the epimorphism $\varphi: G_\Upsilon \to \ZZ$ is not finitely generated and since the epimorphism $G_\Gamma \to G_\Upsilon$ induces and epimorphism of kernels $\gh \twoheadrightarrow K_\Upsilon$, we conclude that $\gh$ is not finitely generated. 
\end{proof}

Notice that in the case when $n_i=1$ for all $i$, that is, when the kernel is a Bestvina-Brady group, then we have that $\Gamma=\Gamma_L$ and so $\Gamma_L$ being connected and $0$-acyclic-dominating is equivalent to $\Gamma$ being connected. In this case, we also have that $l=r=N=1$ and we can take $w=a_1$. Therefore, we recover the well-known fact:

\begin{cor}\label{cor:fg_BB}
Let $\Gamma$ be connected and let $H_\Gamma$ be the Bestvina-Brady subgroup of $G_\Gamma$. Then $H_\Gamma$ is finitely generated and admits an {\rm(}infinite{\rm)} presentation
$$
\gh = \langle x_{0,i} \, \, \mid \, \, \tau(w_M [a_i,a_j] w_{M}^{-1})=1 \rangle,
$$
where $x_{0,i}=a_ia_1^{-1}$, $i=1, \dots, s$, $M\in \ZZ$, $(a_i,a_j)\in E(\Gamma)$.  
\end{cor}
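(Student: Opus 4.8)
The plan is to derive this corollary directly from Theorem \ref{thm:finite_generation}, specialising to the case of the canonical epimorphism. First I would observe that when $\varphi$ is the canonical epimorphism, every generator $a_i$ satisfies $n_i = 1 \neq 0$, so $\Gamma_L = \Gamma$ and $\triangle_{\Gamma_L} = \triangle_\Gamma$; consequently the hypothesis ``$\Gamma_L$ connected and $0$-acyclic-dominating'' collapses to simply ``$\Gamma$ connected'' (the domination condition is vacuous since there are no vertices outside $\Gamma_L$). Thus Theorem \ref{thm:finite_generation} applies and gives finite generation of $H_\Gamma$ together with the stated presentation, once we track what the constants become.

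Next I would compute the relevant constants in this special case. Since $n_i = 1$ for all $i$, we have $N = \operatorname{lcm}\{n_i \mid a_i \in V(\Gamma_L)\} = 1$. For the word $w$ with $\varphi(w) = 1$ we may simply take $w = a_1$, so that $\ell = 1$ and $r_1 = 1$, giving $r = \max\{|r_1|, \dots, |r_\ell|\} = 1$. With $\ell = r = N = 1$, the bounds $0 \leqslant |m| < \ell r N^2$ and $0 \leqslant |t| < \ell^2 r^2 N^3$ both reduce to $m = t = 0$, so the only surviving generators are $x_{0,i} = w_0 a_i w_{0 + n_i}^{-1} = a_i a_1^{-1}$ for $i = 1, \dots, s$. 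The defining relations $x_{t,i} = \tau(w_t a_i w_{t+n_i}^{-1})$ for $|t| < \ell^2 r^2 N^3$ become $x_{0,i} = \tau(a_i a_1^{-1})$, which for $i = 1$ is trivial and for general $i$ just re-expresses the generator, so these can be absorbed; what remains are exactly the relations $\tau(w_M [a_i, a_j] w_M^{-1}) = 1$ for $(a_i, a_j) \in E(\Gamma)$ and $M \in \ZZ$, yielding the asserted (infinite) presentation.

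I do not expect a serious obstacle here: the corollary is a routine specialisation, and the main point is simply bookkeeping — confirming that $\ell = r = N = 1$ forces the index ranges down to a single value and that the relation family $R_3$ degenerates to tautologies. The one small subtlety worth a sentence is why the $0$-acyclic-domination condition becomes automatic: since $V(\triangle_\Gamma) \setminus V(\triangle_{\Gamma_L}) = \emptyset$, Definition \ref{defn:domination} is satisfied trivially for $n = 0$, so connectedness of $\Gamma$ is both necessary and sufficient, which also matches the classical Bestvina–Brady result (Theorem 2.2(1)). This makes the ``well-known fact'' genuinely recovered rather than merely consistent.
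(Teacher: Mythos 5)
Your proposal is correct and follows essentially the same route as the paper, which proves the corollary precisely by specialising Theorem \ref{thm:finite_generation} to the canonical character: $n_i=1$ forces $\Gamma_L=\Gamma$ (so $0$-acyclic-domination is vacuous), $w=a_1$ gives $\ell=r=N=1$, and the index bounds collapse to $m=t=0$, leaving the generators $x_{0,i}=a_ia_1^{-1}$ and the relations $\tau(w_M[a_i,a_j]w_M^{-1})=1$. Your extra bookkeeping on absorbing the degenerate $R_3$ relations matches what the paper leaves implicit.
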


\begin{thm}[Presentation] \label{thm:presentation}
Let $\Gamma_L$ be connected and $1$-acyclic-dominating. Then $\ker \varphi=\gh$ has an {\rm(}infinite{\rm)} presentation of the form:
    $$
    \langle x_{m,i} \, \, \mid \, \, R_1, R_2, R_3\rangle, 
    $$
    where $x_{m,i}=w_m a_i w_{m +n_i}^{- 1}$, $0 \leqslant |m| < \ell rN^2$, and the relations $R_1, R_2$ and $R_3$ are defined as follows:  
\begin{itemize}
 \item[$R_1$:] for any directed cycle $(a_{i_1}, a_{i_2}, \ldots, a_{i_k}=a_{i_1})$ with $k \geqslant 3$,
\[
 \tau(w_s
 e_{i_1,i_2}^{Nq} e_{i_2,i_3}^{Nq} \cdots e_{i_{k-1},i_k}^{Nq}  w_s^{-1})=1, \hspace{0.2cm} 0 \leqslant |s| < N, q\in \ZZ;
\]
\item[$R_2$:] $\tau(w_s [a_i, a_j] w_s^{-1})=1, \hspace{0.2cm} 0 \leqslant |s| < N$, where $(a_i, a_j) \in E(\Gamma)$.
\item [$R_3$:] $x_{t,i}=\tau(w_t a_i (w_{t+n_i})^{-1})$ for $0\leqslant |t| < \ell^2r^2N^3$.
\end{itemize}
\end{thm}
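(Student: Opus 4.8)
The plan is to bootstrap from Theorem~\ref{thm:finite_generation}. Since $1$-acyclic-domination implies $0$-acyclic-domination (Definition~\ref{defn:domination}), that theorem applies and yields
$$
\gh=\bigl\langle\, x_{m,i}\ (0\le|m|<\ell rN^2)\ \mid\ R_3,\ \{\tau(w_M[a_i,a_j]w_M^{-1})=1:M\in\ZZ,\ (a_i,a_j)\in E(\Gamma)\}\,\bigr\rangle .
$$
Everything therefore reduces to proving that, modulo $R_3$, this infinite family of Reidemeister--Schreier commutator relators has the same normal closure as $R_1\cup R_2$.

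First I would dispatch the easy inclusion. The relations $R_2$ are literally the subfamily of the commutator relators with $0\le|s|<N$, and $R_3$ is already among the defining relations; and each relator of $R_1$ holds in $\gh$, because the weighted cycle $e_{i_1,i_2}^{Nq}\cdots e_{i_{k-1},i_k}^{Nq}$ collapses by free cancellation — consecutive weighted edges share the syllable $a_{i_r}^{\pm Nq/n_{i_r}}$ — so by Lemma~\ref{lem:trasverse_homomorphism} its $\tau$-image is trivial. Hence $\langle x_{m,i}\mid R_1,R_2,R_3\rangle$ surjects onto $\gh$, and it remains to prove the reverse.

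For the reverse direction — the crux — I would show that every commutator relator $\tau(w_M[a_i,a_j]w_M^{-1})=1$ with $|M|\ge N$ follows from $R_1\cup R_2\cup R_3$. Running the rewriting process on $w_M[a_i,a_j]w_M^{-1}$ while tracking the coset index as in Remark~\ref{rem:Reidemeister_process} exhibits this relator as the conjugate, by an element $P_M\in\gh$, of the ``commuting square''
$$
\sigma_M^{ij}:=x_{M,i}\,x_{M+n_i,j}\,x_{M+n_j,i}^{-1}\,x_{M,j}^{-1};
$$
for $|M|<N$ the conjugator $P_M$ even lies in the bounded generating set, so $R_2$ and $\{\sigma_s^{ij}=1:|s|<N\}$ have the same normal closure, and it suffices to reduce each $\sigma_M^{ij}$ with $|M|\ge N$ to a product of conjugates of $R_1,R_2,R_3$. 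For this I would re-run the computation behind Equation~\eqref{eq:maineq} on $w_Ma_ia_ja_i^{-1}a_j^{-1}w_M^{-1}$: write $Mr_t=k_tN+s_t$ with $0\le s_t<N$ and push the $N$-divisible syllables along optimal paths of $\Gamma_L$, now using the stronger $1$-acyclic-domination hypothesis to fix $a_k\in V(\Gamma_L)$ adjacent in $\Gamma$ to both $a_i$ and $a_j$ (the cases $a_i\in V(\Gamma_L)$ or $a_j\in V(\Gamma_L)$ being simpler). Routing through $a_k$ straightens the $a_i$- and $a_j$-strands simultaneously and localises the commutator at a level $M'$ with $|M'|<N$, so that $\sigma_M^{ij}$ is rewritten as a product of conjugates of: weighted-edge words $e_{p,q}^N$, which lie in $\langle x_{m,i}:0\le|m|<N\rangle$ by Lemma~\ref{lem:edges}, Observation~\ref{obs-1} and Lemma~\ref{lem: better bound}; the core square $\sigma_{M'}^{ij}$, i.e.\ a relator of $R_2$; and, to reconcile two different optimal routings, relators of $R_1$. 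As in the proof of Theorem~\ref{thm:finite_generation} the conjugating letters carry index $<\ell rN^2$, and the non-generating symbols $x_{t,i}$ produced are supplied inductively by $R_3$, those of index $\ge\ell^2r^2N^3$ dropping out after the Tietze moves already performed there. Combining the two inclusions gives the presentation.

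I expect the main obstacle to be exactly this index bookkeeping: coordinating the indices of the conjugators, of the surviving core commutator relator, and of the generators buried inside the weighted-edge and weighted-path words, so that every symbol produced is a listed generator, an $R_3$-rewritable symbol, or an $R_1$/$R_2$-relator — and carrying this out uniformly in the signs of the $r_t$ and of the remainders $s_t$. A secondary subtlety is checking that changing the optimal path used for the routing (possible as soon as $\Gamma_L$ contains a cycle) alters the rewriting only by conjugates of $R_1$-relators, so that no genuinely new relation is forced.
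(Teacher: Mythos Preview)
Your overall strategy matches the paper's: start from the finite-generation presentation of Theorem~\ref{thm:finite_generation}, verify the easy inclusion (your observation that the weighted cycle $e_{i_1,i_2}^{Nq}\cdots e_{i_{k-1},i_k}^{Nq}$ telescopes is exactly what the paper means by ``cycles define the trivial element''), and then for the hard direction reduce each relator $w_M[a_i,a_j]w_M^{-1}$ to one at a small level together with a weighted-cycle relator, using the $1$-acyclic-domination to produce $a_d\in V(\Gamma_L)$ adjacent to both $a_i$ and $a_j$.

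The gap is in your choice of decomposition. You propose to ``re-run the computation behind Equation~\eqref{eq:maineq}'', i.e.\ to split each $Mr_t=k_tN+s_t$ with $0\le s_t<N$. But that computation localises the core term at level $d'=\sum_t n_ts_t$, which in general only satisfies $0\le d'<\ell N^2$, not $|d'|<N$; moreover iterating the same splitting does not shrink $d'$ (for instance with $\ell=2$, $n_1=2$, $n_2=3$, $r_1=-1$, $r_2=1$, $N=6$ and $M=7$ one gets $d'=13$, and reapplying the procedure to $M=13$ returns $d'=13$ again). So your claimed bound $|M'|<N$ is unjustified, and the core commutator you obtain is not an $R_2$-relator.

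The paper avoids this by a different, simpler splitting: write $M=kN+s$ with $0\le s<N$ directly (not each $Mr_t$ modulo $N$). Then every exponent factors as $Mr_t=sr_t+(kN)r_t$ with the second summand $N$-divisible; one pulls $w_s$ out on the left, converts the $N$-divisible syllables into weighted paths $p_{u,v}^{kN(\cdot)}$ in $\Gamma_L$, routes the last one to the common neighbour $a_d$, and gets
\[
w_M[a_i,a_j]w_M^{-1}=w_s\,X\,p_{\ell,d}^{kN}\,[a_i,a_j]\,p_{d,1}^{kN}\,p_{1,\ell}^{kN}\,X^{-1}\,w_s^{-1},
\]
with $X\in\gh$ a product of $(w_s$-twisted) conjugates of weighted paths. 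Conjugating in $\gh$ by $w_sX^{-1}w_s^{-1}$ and then by $w_s(p_{\ell,d}^{kN})^{-1}w_s^{-1}$ yields
\[
w_M[a_i,a_j]w_M^{-1}\ \sim\ \bigl(w_s[a_i,a_j]w_s^{-1}\bigr)\cdot\bigl(w_s\,C_d^{k}\,w_s^{-1}\bigr),\qquad C_d^{k}=p_{d,1}^{kN}\,p_{1,\ell}^{kN}\,p_{\ell,d}^{kN},
\]
which is exactly the product of an $R_2$-relator and an $R_1$-relator with the correct bound $0\le s<N$. Applying $\tau$ and Lemma~\ref{lem:trasverse_homomorphism} finishes the argument. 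Once you replace your $Mr_t=k_tN+s_t$ splitting by the paper's $M=kN+s$, your outline goes through and the ``index bookkeeping'' you anticipated largely evaporates: the conjugators are explicit elements of $\gh$, and the only role of $R_1$ is precisely to absorb the single weighted cycle $w_sC_d^{k}w_s^{-1}$, not to compare competing routings.
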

\begin{proof}
Since $\Gamma_L$ is connected and $1$-acyclic-dominating, from Theorem \ref{thm:finite_generation} we have that the kernel is finitely generated by $w_m a_i^{\pm 1} w_{m \pm n_i}^{- 1}$, where $0 \leqslant |m| < \ell^2 r^2N^3$.

Recall that by Observation \ref{obs-1}, all relations from $R_1$ can be written as words in the generators of $\gh$.

We are left to prove that $\Gamma_L$ admits a presentation with relations $R_1, R_2, R_3$. We first show that the normal subgroup generated by the Reidemeister--Schreier relations $\{w_M[a_i,a_j] w_M^{-1}\}$ given in Theorem \ref{thm: R-S presentation} is contained in the normal closure of the relations $R_1$ and $R_2$ in the free group $F(a_i\mid a_i\in V(\Gamma))$.

   % Before going to proof the remaining part, we first fix some notations. Let us denote the cycle as $C_i$ which is a closed weighted path of weight $N$, which starts and ends at the vertex $a_i$.

     Let us first consider the case when $M$ is a multiple of $N$: $w_{kN}[a_i, a_j]w_{kN}^{-1}$. We recall that, $N$ is the lcm of $n_i$'s and  $w_{kN}=a_{1}^{kNr_1} \cdots a_{\ell}^{kNr_\ell}$. Thus,
$$
\begin{array}{l}
    \vspace{0.1cm}
       w_{kN}[a_i, a_j]w_{kN}^{-1}  \\
       \vspace{0.1cm}
       
        =a_{1}^{kNr_1} \cdots a_{\ell}^{kNr_\ell} \, \, [a_i, a_j] \, \, a_{\ell}^{-kNr_\ell} \cdots a_{1}^{-kNr_1}\\
        \vspace{0.1cm}

        = (a_{1}^{kNr_1} \, \, a_{2}^{-kNr_1n_1/n_2}) \cdots
        (a_{{\ell -1}}^ {{kN(r_1n_1+ \cdots r_{\ell -1} n_{\ell -1})}/n_{\ell-1}} \, \, a_{\ell} ^{-{kN(r_1n_1+ \cdots r_{\ell -1} n_{\ell -1})}/n_{\ell}})\\
        %\vspace{0.1cm}
        
        (a_{\ell}^{{kN(r_1n_1+ \cdots r_\ell n_\ell)}/n_\ell} \, \, a_d^{-{kN(r_1n_1+ \cdots r_\ell n_\ell)}/n_d}) \, \, [a_i, a_j] \, \, a_d^{kN/n_d} \, \, w_{kN}^{-1}.
        
    \end{array}
$$

Recall that $r_1n_1 + \cdots + r_\ell n_\ell =1$. Using the definition of weighted path, we have that:
$$
 w_{kN}[a_i, a_j]w_{kN}^{-1}           
        = p_{1,2}^{kNr_1n_1} \, \, p_{2,3}^{kN(r_1n_1+r_2n_2)} \cdots p_{{\ell -1},\ell}^{kN(r_1n_1+r_2n_2+ \cdots +r_{\ell-1} n_{\ell-1})} \, \, p_{\ell, d}^{kN} \, \, [a_i, a_j] \, \, p_{d,1}^{kN} \, \, p_{1, \ell}^{kN} \, \, X^{-1},
$$
where $X= p_{1,2}^{kNr_1n_1} \, \, p_{2,3}^{kN(r_1n_1+r_2n_2)} \cdots p_{{\ell -1},\ell}^{kN(r_1n_1+r_2n_2+ \cdots +r_{\ell-1} n_{\ell-1})}$, the paths, $p_{r,s}$ are in $\Gamma_L$, $a_d\in V(\Gamma_L)$ and $(a_d,a_i), (a_d,a_j)\in E(\Gamma)$. The paths $p_{r,s}$ and the vertex $a_d$ with the required properties exist since $\triangle_{\Gamma_L}$ is $1$-acyclic-dominating.
Hence,
\[
w_{kN}[a_i, a_j]w_{kN}^{-1}= X \, \, p_{\ell, d}^{kN} \, \, [a_i, a_j]  \, \, p_{d,1}^{kN} \, \, p_{1, \ell}^{kN} \, \, X^{-1}.
\]

Then, conjugating by $(p_{\ell, d}^{kN})^{-1}X^{-1}$, we see that $w_{kN}[a_i, a_j]w_{kN}^{-1}$ is conjugate in $\gh$ to $[a_i, a_j] C_i^k$, where $C_i^k= \, p_{d,1}^{kN} \, \, p_{1, \ell}^{kN}p_{\ell, d}^{kN}$. Since  $[a_i,a_j] C_d^k \in \langle R_1, R_2\rangle$, we conclude that $w_{kN}[a_i, a_j]w_{kN}^{-1}\in \langle\langle R_1, R_2\rangle\rangle$.

%Similarly, for any multiple of $N$, $w_{kN}[a_i, a_j]w_{kN}^{-1} \sim [a_i, a_j] C_i^{(k)}$.

Now we consider the general case, i.e. $w_M [a_i, a_j] w_M^{-1}$ for an arbitrary integer $M$. Dividing $M$ by $N$ with remainder, we write $M=kN+s$ for $s < N$.

We have the following:
$$
\begin{array}{l}
\vspace{0.1cm}
   w_{kN+s} [a_i, a_j]  (w_{kN+s})^{-1}  \\
   \vspace{0.1cm}
   
   = a_{1}^{r_1s} \, \, a_{1}^{r_1kN} \, \, a_{2}^{r_2s} \, \, a_{2}^{r_2kN} \cdots a_{\ell}^{r_\ell s} \, \, a_{\ell}^{r_\ell kN} \, \, [a_i, a_j] \, \, (w_{kN+s})^{-1} \\
   \vspace{0.1cm}
   
   = a_{1}^{r_1s} \, \, a_{2}^{r_2s} \cdots a_{\ell}^{r_\ell s} \, \, [a_{\ell}^{-r_\ell s} \cdots a_{2}^{-r_2 s} \, \, (a_{1}^{r_1kN} \, \, a_{2}^{-r_1n_1kN/n_2}) \, \, a_{2}^{r_2s} \cdots a_{\ell}^{r_\ell s}] \\
   \vspace{0.1cm}
   
   [a_{\ell}^{-r_\ell s} \cdots a_{3}^{-r_3 s} \, \, (a_{2}^{(r_1n_1+r_2n_2)kN/n_2} \, \, a_{3}^{-(r_1n_1+r_2n_2)kN/n_3}) \, \, a_{3}^{r_3s} \cdots a_{\ell}^{r_\ell s}]
   \cdots \\
   \vspace{0.1cm}
   
   \cdots [a_{\ell}^{-r_\ell s} \, \, (a_{{\ell -1}}^{(r_1n_1+r_2n_2+ \cdots r_{\ell -1}n_{\ell -1})kN/n_{\ell -1}} \, \, a_{\ell}^{-(r_1n_1+r_2n_2+ \cdots r_{\ell -1}n_{\ell -1})kN/n_\ell}) \, \, a_{\ell}^{r_\ell s}] \, \, (a_{\ell}^{kN/n_{\ell}} \, \, a_{d}^{-kN/n_{d}}) \\
   %\vspace{0.1cm}
   
   [a_i, a_j ] a_d^{kN/n_d} (w_{kN+s})^{-1} 
   \\
   \vspace{0.1cm}

   = w_s \, \, [a_{\ell}^{-r_\ell s} \cdots a_{2}^{-r_2 s} \, \, p_{1, 2}^{r_1n_1kN} \, \, a_{2}^{r_2s} \cdots a_{\ell}^{r_\ell s}] \cdots [a_{\ell}^{-r_\ell s} \, \, p_{{\ell -1}, \ell}^{(r_1n_1+r_2n_2+ \cdots r_{\ell -1}n_{\ell -1})kN} \, \, a_{\ell}^{r_\ell s}] \, \, p_{\ell, d}^{kN} \\

   [a_i, a_j ] \, \,  a_d^{kN/n_d} \, \, (w_{kN+s})^{-1},
\end{array}
$$
where $p_{u,v}$ are paths in $\Gamma_L$, $a_d\in V(\Gamma_L)$ and $(a_d,a_i), (a_d, a_j)\in E(\Gamma)$.

Let $X = [a_{\ell}^{-r_\ell s} \cdots a_{2}^{-r_2 s} \, \, p_{1, 2}^{r_1n_1kN} \, \, a_{2}^{r_2s} \cdots a_{\ell}^{r_\ell s}] \cdots [a_{\ell}^{-r_\ell s} \, \, p_{{\ell -1}, \ell}^{(r_1n_1+r_2n_2+ \cdots r_{\ell -1}n_{\ell -1})kN} \, \, a_{\ell}^{r_\ell s}]$.

Then, 
\begin{equation}\label{equ: rel_gen}
 w_{kN+s} \, \, [a_i, a_j] \, \, (w_{kN+s})^{-1} =  w_s \, \, X \, \, p_{\ell, d}^{kN} \, \, [a_i, a_j ] \, \, p_{d, 1} ^{kN} \, \, p_{1, \ell}^{kN} \, \, X^{-1} \, \, w_s^{-1}.
\end{equation}

Conjugating \eqref{equ: rel_gen} by $w_s X^{-1} w_s^{-1}$, we obtain 
\begin{gather}\notag
\begin{split}
w_{kN+s} \, \, [a_i, a_j] \, \, (w_{kN+s})^{-1} & \sim w_s \, \, p_{\ell, d}^{kN} \, \, [a_i, a_j ] \, \, p_{d, 1} ^{kN} \, \, p_{1, \ell}^{kN} \, \, w_s^{-1} \\
&= (w_s \, \, p_{\ell, i}^{kN} \, \, w_s^{-1}) \, \, w_s \, \, [a_i, a_j ] \, \, p_{d, 1} ^{kN} \, \, p_{1, \ell}^{kN} \, \, w_s^{-1}
\end{split}
\end{gather}
in $\gh$ (note that $w_s X^{-1} w_s^{-1} \in \gh$). Since $w_s \, \, (p_{ \ell, d}^{kN})^{-1} \, \, w_s^{-1} \in \gh$ again conjugating by $w_s \, \, (p_{\ell, d}^{kN})^{-1} \, \, w_s^{-1}$, we see that 
$$
w_{kN+s} \, \, [a_i, a_j] \, \, (w_{kN+s})^{-1} \sim w_s \, \, [a_i, a_j]  \, \, p_{d,1}^{kN} \, \, p_{1,\ell}^{kN} \, \, p_{\ell, d}^{kN} \, \, w_s^{-1} =( w_s [a_i, a_j] w_s^{-1}) \, \,( w_s C_d^{k} w_s^{-1}),
$$
where $C_d^k= \, p_{d,1}^{kN} \, \, p_{1, \ell}^{kN}p_{\ell, d}^{kN}$.  Therefore, $w_{M}[a_i, a_j]w_{M}^{-1} \in \langle\langle R_1, R_2 \rangle\rangle$.

Straightforward computation shows that cycles define the trivial element in $\gh$ and so $R_1,R_2, R_3 \in \langle\langle R_3, w_{M}[a_i, a_j]w_{M}^{-1}, M\in \ZZ\rangle\rangle$. Now the result follows from Lemma \ref{lem:trasverse_homomorphism}.
\end{proof}

Notice that in the case when $n_i=1$ for all $i$, that is, when the kernel is a Bestvina-Brady group, then we have that $\Gamma=\Gamma_L$ and so $\Gamma_L$ being connected and $1$-acyclic-dominating is equivalent to $\Gamma$ being connected. In this case, we also have that $\ell=r=N=1$ and we can take $w=a_1$. Using Corollary \ref{cor:fg_BB}, we recover Dicks and Leary's presentation:

\begin{cor}[cf. Theorem 1 of \cite{DL}]
Let $\Gamma$ be connected and let $\gh$ be the Bestvina-Brady subgroup of $G_\Gamma$. Then $H_\Gamma$ admits the presentation
$$
    \langle x_{0,i} \, \, \mid \, \, \tau(e_{i_1,i_2}^{q} e_{i_2,i_3}^{q} \cdots e_{i_{k-1},i_k}^{q}) \rangle, 
    $$
    where $x_{0,i}= a_ia_1^{- 1}$, $a_i\in V(\Gamma)$, $e_{i_r,i_s}^m = a_{i_r}^{m} a_{i_s}^{-m}$, $(a_{i_r}, a_{i_s})\in E(\Gamma_L)$,$(e_{i_1, i_2}, \ldots, e_{i_{k-1}, i_1})$ is a directed cycle with $k \geqslant 3$, and $ q \in \ZZ$.
\end{cor}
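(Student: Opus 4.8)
The plan is to obtain the statement by specializing Theorem~\ref{thm:presentation} (together with Corollary~\ref{cor:fg_BB}) to the canonical epimorphism, i.e.\ to the case $n_i=1$ for all $i$, and then simplifying the resulting presentation. Setting $n_i=1$ makes $\Gamma_L=\Gamma$, so the hypothesis ``$\Gamma$ connected'' is exactly ``$\Gamma_L$ connected'', and $1$-acyclic-domination is automatic, since $\Gamma\setminus\Gamma_L=\varnothing$ makes the condition of Definition~\ref{defn:domination} vacuous (and $\Gamma_L\ne\varnothing$ gives $(-1)$-acyclic-domination). Also $N=\operatorname{lcm}\{n_i\}=1$, and we may take $w=a_1$, so $\ell=1$, $r=|r_1|=1$ and $w_m=a_1^{m}$ for every $m$. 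Hence all the index bounds collapse: $0\le|m|<\ell rN^2=1$, $0\le|t|<\ell^2r^2N^3=1$ and $0\le|s|<N=1$ all force $m=t=s=0$. Thus Theorem~\ref{thm:presentation} presents $H_\Gamma$ on the single family of generators $x_{0,i}=w_0a_iw_{n_i}^{-1}=a_ia_1^{-1}$, and the weighted edges become $e_{i_r,i_s}^{q}=a_{i_r}^{q/n_{i_r}}a_{i_s}^{-q/n_{i_s}}=a_{i_r}^{q}a_{i_s}^{-q}$.

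Next I would read off the three families of relators. The relators $R_3$ become $x_{0,i}=\tau(a_ia_1^{-1})$; since $\tau(a_ia_1^{-1})=x_{0,i}x_{0,1}^{-1}$ and $x_{0,1}=a_1a_1^{-1}=1$, each of them is the trivial identity $x_{0,i}=x_{0,i}$ and can be dropped. With $s=0$ (so $w_s=1$) and $Nq=q$, the relators $R_1$ become $\tau(e_{i_1,i_2}^{q}e_{i_2,i_3}^{q}\cdots e_{i_{k-1},i_k}^{q})=1$ over all directed cycles $(a_{i_1},\dots,a_{i_k}=a_{i_1})$ with $k\ge3$ and all $q\in\ZZ$ --- precisely the family in the statement. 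So only the commuting relators $R_2$, namely $\tau([a_i,a_j])=1$ for $(a_i,a_j)\in E(\Gamma)$, remain to be removed.

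The real content is to show that $R_2\subseteq\langle\langle R_1\rangle\rangle$, after which a Tietze transformation deletes $R_2$. I would first reduce to the case $w_M=1$: by the argument in the proof of Theorem~\ref{thm:presentation} (specialized to $N=1$, $s=0$), every Reidemeister relator $\tau(w_M[a_i,a_j]w_M^{-1})$ from Corollary~\ref{cor:fg_BB} is conjugate in $H_\Gamma$ to $\tau([a_i,a_j])$ times a relator of the family $R_1$, so the whole family lies in the normal closure of $R_1\cup\{\tau([a_i,a_j])\mid(a_i,a_j)\in E(\Gamma)\}$, and it suffices to treat $\tau([a_i,a_j])$ itself. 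For this, fix an edge $(a_i,a_j)\in E(\Gamma)$ and pick paths in $\Gamma$ from $a_1$ to $a_i$ and from $a_1$ to $a_j$; expanding $x_{1,i}$ and $x_{1,j}$ along these paths as in Equation~\eqref{eq:maineq} (using Lemma~\ref{lem:edges} and Observation~\ref{obs-1}) and unwinding the rewriting gives $\tau([a_i,a_j])=x_{0,i}\,x_{1,j}\,x_{1,i}^{-1}\,x_{0,j}^{-1}$. Since $a_i$ and $a_j$ commute in $G_\Gamma$ we have $w_0a_ia_jw_2^{-1}=w_0a_ja_iw_2^{-1}$, i.e.\ the ``square identity'' $x_{0,i}x_{1,j}=x_{0,j}x_{1,i}$ holds in $H_\Gamma$; I would show this identity is a consequence of the cycle relators $R_1$ attached to the closed walk obtained by concatenating the chosen $a_1$-to-$a_i$ path, the edge $(a_i,a_j)$, and the reverse of the $a_1$-to-$a_j$ path --- this is the Bestvina--Brady shadow of Dicks and Leary's observation that, in their edge presentation, the commuting relators are consequences of the directed-cycle relators. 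Substituting the square identity into $x_{0,i}x_{1,j}x_{1,i}^{-1}x_{0,j}^{-1}$ collapses it to the empty word, so $\tau([a_i,a_j])\in\langle\langle R_1\rangle\rangle$. Since the cycle relators do hold in $H_\Gamma$ (a direct check, already used in the proof of Theorem~\ref{thm:presentation}) and, by Corollary~\ref{cor:fg_BB} and Lemma~\ref{lem:trasverse_homomorphism}, the relators of $H_\Gamma$ are normally generated by the $\tau(w_M[a_i,a_j]w_M^{-1})$, we conclude that $\langle x_{0,i}\mid R_1\rangle$ presents $H_\Gamma$.

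The main obstacle is precisely that step: verifying that the square identities $x_{0,i}x_{1,j}=x_{0,j}x_{1,i}$ for $(a_i,a_j)\in E(\Gamma)$ (equivalently, all of $R_2$) lie in the normal closure of the directed-cycle relators $R_1$. This is essentially bookkeeping --- tracking the Reidemeister rewriting and the path-dependent expansions of the $x_{1,\bullet}$ inside $\langle x_{0,i}\mid R_1\rangle$ --- but it is the only place where the connectedness of $\Gamma$ and the fact that $(a_i,a_j)$ is an edge are genuinely used, and it is where care is required.
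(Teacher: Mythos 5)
Your specialization step is exactly the paper's own derivation: the paper proves this corollary only through the remark preceding it, setting $n_i=1$, so $\Gamma_L=\Gamma$, $\ell=r=N=1$, $w=a_1$, and letting all index bounds collapse to $m=s=t=0$ in Theorem \ref{thm:presentation}, with an appeal to Corollary \ref{cor:fg_BB}; the paper offers no argument beyond that, and in particular never explains why the families $R_2$ and $R_3$ may be discarded. You correctly identify that discarding $R_2$ is the real content of the corollary, but that is precisely the step you do not carry out: you assert that the ``square identity'' $x_{0,i}x_{1,j}=x_{0,j}x_{1,i}$ (equivalently $\tau([a_i,a_j])$) lies in the normal closure of the cycle relators $R_1$, and defer the verification as ``bookkeeping''.

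The difficulty is that, with the paper's own definitions, this step cannot be completed as you describe. Each $R_1$ relator is $\tau$ applied to the word $e_{i_1,i_2}^{q}e_{i_2,i_3}^{q}\cdots e_{i_{k-1},i_1}^{q}$, and this word is freely trivial in $F(V(\Gamma))$: the second half of each weighted edge cancels the first half of the next, and the cycle condition $i_k=i_1$ kills what remains. By Lemma \ref{lem:trasverse_homomorphism}, $\tau$ of a freely trivial word is freely trivial in the generators $x_{m,i}$, so the normal closure of $R_1$ alone is trivial and no nontrivial identity --- in particular not $x_{0,i}x_{1,j}=x_{0,j}x_{1,i}$ --- can be a consequence of it. Concretely, for $\Gamma$ a triangle the group $\langle x_{0,i}\mid R_1\rangle$ is free of rank $3$, while $H_\Gamma\cong\ZZ^2$; in these vertex-type generators the rewritten commutator relators $R_2$ (expressed in the $x_{0,i}$ as in Remark \ref{rem:relations_in_generators}) are indispensable. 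What makes Dicks--Leary's Theorem~1 work is that their cycle relators are words in \emph{edge} generators, where they are not freely trivial, and the commutations are then derived inside that edge presentation; recovering their statement from Theorem \ref{thm:presentation} therefore requires either retaining $R_2$ (which is all the specialization actually yields) or a genuine change of generators to edge generators together with Dicks--Leary's own derivation --- neither of which your sketch (nor the paper's one-line remark) supplies. A smaller point: dropping $R_3$ outright also loses the relation $x_{0,1}=1$; it should be accompanied by the removal of the redundant generator $x_{0,1}$.
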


\begin{thm}[Finite presentation]\label{thm:finite_presentation}
Let $\triangle_{\Gamma_L}$ be simply connected and $1$-acyclic-dominating. Then the kernel of $\ker \varphi=\gh$ has the following finite presentation 
    $$
    \langle x_{m,i} \, \, \mid \, \, R_1', R_2, R_3\rangle, 
    $$
    where $x_{t,i}=w_t a_i w_{t+n_i}^{-1}$, $0 \leqslant |m| < \ell r N^2$. The relations $R_1', R_2, R_3$ are defined as follows:  
\begin{itemize}
\item[$R_1'$:] for any directed $3$-cycle $(a_{i_1}, a_{i_2}, a_{i_3})$,
\[
 \tau(w_s
 e_{i_1,i_2}^{Nq} e_{i_2,i_3}^{Nq} e_{i_{3},i_1}^{Nq}  w_s^{-1}), \hspace{0.2cm} 0 \leqslant |s| < N, q=\pm 1;
\]

\item[$R_2$:] $\tau(w_s [a_i, a_j] w_s^{-1})=1, \hspace{0.2cm} 0 \leqslant |s| < N$, where $(a_i, a_j) \in E(\Gamma)$.
\item [$R_3$:] $x_{t,i}=\tau(w_t a_i (w_{t+n_i})^{-1})$ for $0\leqslant |t| < \ell^2r^2N^3$.
\end{itemize}
\end{thm}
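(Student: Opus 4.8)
The plan is to start from the (generally infinite) presentation
$$\gh=\langle x_{m,i}\mid R_1,R_2,R_3\rangle$$
supplied by Theorem \ref{thm:presentation}. Its hypotheses hold here: a simply connected complex is in particular connected, and the $1$-acyclic-domination hypothesis is literally the same, so the theorem applies. Since $R_1'\subseteq R_1$ while $R_2,R_3$ are common to both presentations, it suffices to prove that, in the free group on the finitely many generators $x_{m,i}$ with $0\leqslant|m|<\ell rN^2$, every relator of $R_1$ lies in the normal closure of $R_1'\cup R_2\cup R_3$; then Tietze's theorem identifies the two presentations, and finiteness is immediate because $R_1'$ has one relator per directed triangle of $\triangle_{\Gamma_L}$ and per $q\in\{\pm1\}$, $R_2$ one per edge of $\Gamma$ and per $s$ with $0\leqslant|s|<N$, and $R_3$ one per pair $(t,i)$ with $0\leqslant|t|<\ell^2r^2N^3$.

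The first step is to bring the cycle down to length $3$, and this is precisely where simple connectedness enters. Since $\triangle_{\Gamma_L}$ is a flag complex, its $2$-skeleton is obtained from $\Gamma_L$ by gluing a triangle along every $3$-clique, so $\pi_1(\triangle_{\Gamma_L})$ coincides with the fundamental group of this $2$-skeleton and is trivial. Hence, in the edge-path description of the fundamental group, every edge-loop $c=(a_{i_1},\dots,a_{i_k})$ of $\Gamma_L$ is a product of conjugates of boundaries of $2$-simplices; concretely, $c$ is carried to the constant loop by finitely many elementary moves, namely insertion/deletion of a backtrack $e_{u,v}e_{v,u}$ and replacement of a length-$2$ subpath $e_{u,v}e_{v,w}$ by $e_{u,w}$ whenever $(a_u,a_v,a_w)$ spans a triangle of $\triangle_{\Gamma_L}$. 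Applying the weighting $e_{u,v}\mapsto e_{u,v}^{Nq}$ (a homomorphism on the free group on oriented edges), a backtrack becomes the freely trivial word $e_{u,v}^{Nq}e_{v,u}^{Nq}$, while a triangle move introduces exactly the weighted $3$-cycle word $e_{u,v}^{Nq}e_{v,w}^{Nq}e_{w,u}^{Nq}$ of a triangle of $\triangle_{\Gamma_L}$. Therefore the weighted cycle word of $c$ lies in the normal closure of the weighted $3$-cycle words of the triangles of $\triangle_{\Gamma_L}$ with the same exponent $q$; conjugating by $w_s$ and applying the Reidemeister rewriting $\tau$, which by Lemma \ref{lem:trasverse_homomorphism} is well defined and multiplicative on $\gh$, we conclude that the $R_1$-relator of $(c,q,s)$ lies in the normal closure of the $R_1$-relators attached to triangles of $\triangle_{\Gamma_L}$ with exponent $q$ and the same $s$, together with $R_2$. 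Thus we are reduced to the relators $R_1$ of directed $3$-cycles with arbitrary $q\in\ZZ$.

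The second step reduces the exponent to $q=\pm1$, and here the flag condition does the work. If $(a_{i_1},a_{i_2},a_{i_3})$ spans a triangle, the three vertices pairwise commute in $G_\Gamma$; by Observation \ref{obs-1} this gives $e_{i_r,i_s}^{Nq}=(e_{i_r,i_s}^{N})^{q}$ and the pairwise commutation in $G_\Gamma$ of $e_{i_1,i_2}^{N},e_{i_2,i_3}^{N},e_{i_3,i_1}^{N}$, whence
$$e_{i_1,i_2}^{Nq}e_{i_2,i_3}^{Nq}e_{i_3,i_1}^{Nq}=\bigl(e_{i_1,i_2}^{N}e_{i_2,i_3}^{N}e_{i_3,i_1}^{N}\bigr)^{q}$$
in $G_\Gamma$; the corresponding identities between the associated words in the generators $x_{m,i}$ hold modulo the normal closure of $R_2$, since the auxiliary commutators that intervene are products of conjugates of the basic commutators $[a_i,a_j]$, reduced to the finite family $R_2$ exactly as in the commutator-reduction carried out in the proof of Theorem \ref{thm:presentation}. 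Applying $\tau(w_s(-)w_s^{-1})$ and the multiplicativity of $\tau$, the $3$-cycle $R_1$-relator with exponent $q$ becomes, modulo $\langle\langle R_2\rangle\rangle$, the $q$-th power of the $R_1'$-relator with exponent $1$; it is therefore a consequence of $R_1'$ with $q=1$ when $q\geqslant1$, of $R_1'$ with $q=-1$ when $q\leqslant-1$, and trivial when $q=0$. Combining the two steps yields $R_1\subseteq\langle\langle R_1'\cup R_2\rangle\rangle$, and the theorem follows.

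I expect the honest execution of the first step to be the main obstacle: turning the topological null-homotopy of an arbitrary cycle in the simply connected complex $\triangle_{\Gamma_L}$ into an algebraic derivation of its $R_1$-relator from the triangle relators, while keeping precise control of the Reidemeister rewriting $\tau$, of the conjugations by the coset representatives $w_s$, and — most delicately — of the fact that every commutator that appears along the way can be traded for one of the finitely many relators in $R_2$. The second step, by contrast, is the classical Dicks–Leary exponent reduction, available here because the triangles of $\triangle_{\Gamma_L}$ consist of pairwise commuting vertices.
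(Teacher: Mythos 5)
Your proposal follows essentially the same route as the paper: starting from the infinite presentation of Theorem \ref{thm:presentation}, you reduce the infinite family $R_1$ to the finite family $R_1'$ by first using simple connectedness of $\triangle_{\Gamma_L}$ to derive the relators of long cycles from those of directed $3$-cycles, and then using the pairwise commutation inside triangles to reduce arbitrary exponents $q$ to $q=\pm 1$, which is precisely the Dicks--Leary argument (\cite[Proposition 2]{DL}) that the paper's proof invokes. The only difference is one of exposition: the paper defers both reduction steps to the cited reference, whereas you sketch them explicitly (and correctly flag the delicate point, namely controlling $\tau$, the conjugations by $w_s$, and the trading of intermediate commutators against the finite family $R_2$).
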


\begin{proof}
The only infinite family of relations in the presentation given in Theorem \ref{thm:presentation} is the family $R_1$. The proof that this set is finite when $\triangle_\Gamma$ is simply connected is analogous to that of \cite[Proposition 2]{DL}. In short, if $\triangle_\Gamma$ is simply connected, cycles of length $3$ in $\Gamma$ generate the fundamental group of the graph $\Gamma$ and, in this case, the authors prove that the relations for $n$-cycles are a consequence of the relations for $3$-cycles (together with $R_2, R_3$). In addition, if the edges $e_1e_2e_3$ define an oriented triangle, from the values $q=1,-1$ in the relations, one obtains that $[e_1,e_2]=1$ and $e_1e_2=e_3$. From this commutation relation, one deduces that the relations for other values of $q\in \ZZ$ are consequences. See \cite[Proposition 2]{DL} for details. 
\end{proof}

Notice that in the case when $n_i=1$ for all $i$, that is when the kernel is a Bestvina-Brady group, we have that $\triangle_\Gamma=\triangle_{\Gamma_L}$ and so $\triangle_{\Gamma_L}$ being simply connected and $1$-acyclic-dominating is equivalent to $\triangle_\Gamma$ being simply connected. In this case, we also have that $\ell=r=N=1$ and, without the loss of generality, we can take $w=a_1$. We recover Dicks and Leary's presentation:

\begin{cor}[cf. Proposition 2 and Corollary 3 of \cite{DL}]
Let $\triangle_\Gamma$ be simply connected and let $H_\Gamma$ be the Bestvina-Brady subgroup of $G_\Gamma$. Then $H_\Gamma$ admits the finite presentation
$$
    \langle x_{0,i} \, \, \mid \, \, \tau(e_{i_1,i_2}^{q} e_{i_2,i_3}^{q} e_{i_3,i_1}^{q}) \rangle, 
    $$
    where $x_{0,i}= a_ia_1^{- 1}$, $a_i\in V(\Gamma)$, $e_{i_r,i_s}=a_{i_r}a_{i_s}^{-1}$ and $e_{i_1,i_2} e_{i_2,i_3} e_{i_3,i_1}$ is a directed $3$-cycle, and $q=\{\pm 1\}$.
\end{cor}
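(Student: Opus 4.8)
The plan is to apply Theorem~\ref{thm:finite_presentation} to the canonical epimorphism, i.e.\ to the case $n_i=1$ for all $i$, and then to unwind what the statement becomes. First I would observe that when $n_i=1$ every standard generator has nonzero image, so $\Gamma=\Gamma_L$ and $\triangle_{\Gamma_L}=\triangle_\Gamma$; consequently the hypothesis ``$\triangle_{\Gamma_L}$ simply connected and $1$-acyclic-dominating'' reduces to ``$\triangle_\Gamma$ simply connected'', the domination clauses of Definition~\ref{defn:domination} being vacuous when $K=L$ (and $\triangle_\Gamma\neq\emptyset$). Next the constants degenerate: $N=\operatorname{lcm}\{1,\dots,1\}=1$, and since $n_1r_1+\dots+n_\ell r_\ell=1$ with all $n_i=1$ we may take $w=a_1$, so that $\ell=1$, $r=|r_1|=1$, $w_m=a_1^m$, and hence $\ell rN^2=\ell^2r^2N^3=1$.

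Feeding these values into Theorem~\ref{thm:finite_presentation}, the generating set $\{x_{m,i}:0\leqslant|m|<\ell rN^2\}$ becomes $\{x_{0,i}:a_i\in V(\Gamma)\}$ with $x_{0,i}=w_0a_iw_{n_i}^{-1}=a_ia_1^{-1}$ (so $x_{0,1}=1$), and a weighted edge $e_{i_r,i_s}^{Nq}$ is just $e_{i_r,i_s}^{q}=a_{i_r}^qa_{i_s}^{-q}$; in particular $e_{i_r,i_s}=a_{i_r}a_{i_s}^{-1}$ is realised as the word $x_{0,i_r}x_{0,i_s}^{-1}$, recovering the Dicks--Leary inclusion $e=(a_i,a_j)\mapsto a_ia_j^{-1}$. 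Since $|s|<N=1$ forces $s=0$, the family $R_1'$ becomes exactly the words $\tau(e_{i_1,i_2}^{q}e_{i_2,i_3}^{q}e_{i_3,i_1}^{q})$ over directed $3$-cycles and $q=\pm1$, which is the relation set in the statement.

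It then remains to argue that the families $R_2$ and $R_3$ are superfluous. For $R_3$, the bound $|t|<\ell^2r^2N^3=1$ leaves only $t=0$, and by the computation in the proof of Lemma~\ref{lem:trasverse_homomorphism} the relation $x_{0,i}=\tau(a_ia_1^{-1})=x_{0,i}x_{0,1}^{-1}$ merely says $x_{0,1}=1$; a Tietze move deletes the generator $x_{0,1}$ together with this relation, which is why $R_3$ does not appear. For $R_2$, the bound $|s|<N=1$ leaves the relations $\tau([a_i,a_j])=1$ for $(a_i,a_j)\in E(\Gamma)$; here one first rewrites $w_M[a_i,a_j]w_M^{-1}$ via the weighted-path identity from the proof of Theorem~\ref{thm:presentation} (where, because $\ell=N=1$, the correction term $C_d^k$ is trivial, so $w_M[a_i,a_j]w_M^{-1}$ is conjugate in $\gh$ modulo $R_1'$ to $[a_i,a_j]$), and then invokes the Dicks--Leary observation \cite[Proposition~2]{DL} that the relations $q=1$ and $q=-1$ of a directed triangle $(e,f,g)$ are together equivalent to $ef=fe$ and $ef=g$, from which the commutator relations follow. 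The main obstacle is precisely this last point: one has to re-express $\tau([a_i,a_j])$---which a priori involves the non-generators $x_{m,i}$ with $m\neq0$---in terms of the $x_{0,i}$ and check that the resulting word lies in the normal closure of $R_1'$; this is exactly the bookkeeping carried out, in the edge-generated setting, in \cite[Proposition~2]{DL}, simplified here by the choice $w=a_1$.
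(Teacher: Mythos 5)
Your proposal follows essentially the same route as the paper: the paper's justification of this corollary is precisely the specialization paragraph preceding it ($n_i=1$ for all $i$, hence $\triangle_{\Gamma_L}=\triangle_\Gamma$ and the domination hypotheses become vacuous, $\ell=r=N=1$, $w=a_1$), fed into Theorem \ref{thm:finite_presentation}, and your first two paragraphs reproduce this faithfully, including the identification of the generators $x_{0,i}=a_ia_1^{-1}$ and of $R_1'$ with the stated $3$-cycle relators. Where you go beyond the paper is in reconciling the fact that the specialized theorem still carries the families $R_2$ and $R_3$, which the corollary's statement drops without comment. Your handling of $R_3$ is correct: at $t=0$ it collapses to the single relation $x_{0,1}=1$, eliminated by a Tietze move --- though note that this move removes the generator $x_{0,1}$ as well, so strictly one should either omit $x_{0,1}$ from the generating set or retain the relation $x_{0,1}=1$ (the corollary as printed keeps all $x_{0,i}$, $a_i\in V(\Gamma)$, which is an imprecision of the statement rather than of your argument). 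Your handling of $R_2$ --- that the rewritten commutator relators lie in the normal closure of the $q=\pm 1$ triangle relators --- is only sketched, by appeal to \cite[Proposition 2]{DL}; but this is exactly the level of detail the paper itself offers (its proof of Theorem \ref{thm:finite_presentation} defers to the same proposition, and its proof of the corollary offers nothing further), so relative to the paper this is not a gap, merely the point at which both arguments lean on Dicks--Leary rather than carrying out the bookkeeping explicitly.
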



\begin{thebibliography}{99}

%\bibitem{BRY} E. M. Barquinero, L. Ruffoni, K. Ye,
%``Graphical splittings of Artin kernels'', \textit{J. Group Theory}  vol. 24, no. 4, (2021), pp. 711-735.

\bibitem{Baumslag} B. Baumslag, ``Intersections of Finitely Generated Subgroups in Free Products", \textit{J. London Math. Society}, 41 (1966), 673-679.

\bibitem{BB} M. Bestvina and N. Brady, ``Morse theory and finiteness properties of groups'', \textit{Invent. Math.} \textbf{129}(3) (1997), 445--470.

\bibitem{B} R. Bieri, ``Homological dimension of discrete groups'', \textit{Queen Mary College Mathematics Notes}, University of London (1976).

\bibitem{CL} M. Casals-Ruiz, J. Lopez de Gamiz Zearra, ``On finiteley generated normal subgroups of right-angled Artin groups and graph products of groups", arXiv:2206.12182 

\bibitem{CCL} C. Cashen, G. Levitt,  ``Mapping tori of free group automorphisms, and the Bieri-Neumann-Strebel invariant of graphs of groups", \textit{J. Group Th} \textbf{19} (2006), 191--216, DOI:10.1515/jgth-2015-0038. 


\bibitem{CR} Y-C Chang, L. Ruffoni, ``A graphical description of the BNS-invariants of Bestvina-Brady groups and the RAAG recognition problem'', to appear in \textit{Groups, Geometry, and Dynamics}, 2023. 
\emph{arXiv:2212.06901}.


\bibitem{Charney} R. Charney, ``An introduction to right-angled Artin groups'', \textit{Geom. Dedicata} \textbf{125} (2007), 141--158.

%\bibitem{Clay} M. Clay, ``When does a right-angled Artin group split over Z?'', \textit{Internat. J. Algebra Comput.}, \textbf{24}(6):815--825, 2014.

%\bibitem{GH} D. Groves, M. Hull, ``Abelian splittings of right-angled Artin groups'', In Hyperbolic geometry and geometric group theory, volume 73 of Adv. Stud. Pure Math., pages 159--165. Math. Soc. Japan, Tokyo, 2017.

\bibitem{DL} W. Dicks and I. J. Leary, ``Presentations for subgroups of Artin groups'', \textit{Proc. Amer. Math. Soc.}
\textbf{127}(2) (1999), 343--348.

\bibitem{KF} D. Kochloukova, F. Ferreira Lima, ``Homological finiteness properties of fibre products", \textit{The Quarterly Journal of Mathematics} \textbf{69}, (3) (2018), 835--854, https://doi.org/10.1093/qmath/hax063

\bibitem{MKS} Wilhelm Magnus, Abraham Karrass and Donald Solitar, ``Combinatorial Group Theory: Presentations of Groups in Terms of Generators and Relations'', Courier Corporation, 2004.

\bibitem{MMV} John Meier, Holger Meinert and Leonard VanWyk, ``Higher generation subgroup sets and the $\Sigma$-invariants of graph groups'', \textit{Comment.  Math.  Helv.} \textbf{73} (1998) 22--44.

\bibitem{PS} S. Papadima and A. Suciu, ``Algebraic invariants for Bestvina-Brady groups'', \textit{J. Lond. Math. Soc. (2)}, \textbf{76}(2) (2007), 273--292.

\bibitem{S} J. Stallings, ``A finitely presented group whose 3-dimensional integral homology is not finitely generated'', \textit{Amer. J. Math.} \textbf{85}(1963), 541--543.

\end{thebibliography}
\end{document}